\numberwithin{equation}{section}
\DeclareMathOperator{\diam}{diam}
\DeclareMathOperator{\dist}{dist}
\DeclareMathOperator{\dive}{div}
\DeclareMathOperator{\Lip}{Lip}
\newcommand{\ds}{\displaystyle}
\newcommand{\R}{\mathbb R}
\newcommand{\N}{\mathbb N}
\newcommand{\W}{\mathcal W}
\newcommand{\ve}{\varepsilon}
\newcommand{\deb}{\rightharpoonup}
\newcommand{\sL}{\mathscr{L}}
\newcommand{\si}{\sigma}
\newcommand{\ga}{\gamma}
\newcommand{\Om}{\Omega}
\newcommand{\B}{{\cal B}}
\newcommand{\weak}{\rightharpoonup}
\newcommand{\cC}{\mathcal{C}}
\newcommand{\m}{\mathfrak{m}}
\def\A{{\mathcal A}}
\def\F{{\mathcal F}}
\def\M{{\mathcal M}}
\def\P{{\mathcal P}}
\def\H{{\mathcal H}}
\theoremstyle{plain}
\newtheorem{thm}{Theorem}[section]
\newtheorem{prop}[thm]{Proposition}
\newtheorem{lem}[thm]{Lemma}
\newtheorem{defi}[thm]{Definition}
\theoremstyle{remark}
\newtheorem{rem}[thm]{Remark}
\newtheorem*{ack}{Acknowledgements}
\title{A Benamou-Brenier approach to branched transport}
\author{Lorenzo Brasco\textsuperscript{1,2}
\qquad\qquad Giuseppe Buttazzo\textsuperscript{1}
\qquad\qquad Filippo Santambrogio\textsuperscript{2}
}
\date{31.03.2010}
\begin{document}

\maketitle

\footnotetext[1]{\scriptsize\ Dipartimento di Matematica, Universit\`a di Pisa, Largo B. Pontecorvo 5 - 56127 Pisa, ITALY
\texttt{brasco@mail.dm.unipi.it, buttazzo@dm.unipi.it}}
\footnotetext[2]{\scriptsize\ CEREMADE, UMR CNRS 7534, Universit\'e Paris-Dauphine, Pl. de Lattre de Tassigny - 75775 Paris Cedex 16, FRANCE
\texttt{filippo@ceremade.dauphine.fr}}
\bigskip

\begin{abstract}
The problem of branched transportation aims to describe the movement of masses when, due to concavity effects, they have the interest to travel together as much as possible, because the cost for a path of length $\ell$ covered by a mass $m$ is proportional to $m^\alpha\ell$ with $0<\alpha<1$. The optimization of this criterion let branched structures appear and is suitable to applications like road systems, blood vessels, river networks\dots Several models have been employed in the literature to present this transport problem, and the present paper looks at a dynamical one, similar to the celebrated Benamou-Brenier formulation of Kantorovitch optimal transport. The movement is represented by a path $\rho_t$ of probabilities, connecting an initial state $\mu_0$ to a final state $\mu_1$, satisfying the continuity equation $\partial_t\rho+\dive_xq=0$ together with a velocity field $v$ (with $q=\rho v$ being the momentum). The transportation cost to be minimized is non-convex and finite on atomic measures: $\int_0^1\big(\int_\Omega\rho^{\alpha-1}|q|\,d\#(x)\big)\,dt$.
\end{abstract}

\medskip\noindent
{\bf AMS Subject Classification (2000):} 49J45, 49Q20, 49Q10, 90B18, 60K30

\bigskip\noindent
{\bf Keywords:} Optimal transport, branched transport, continuity equation, functionals on spaces of measures, optimal networks.

\section{Introduction}\label{sint}

The optimal mass transportation theory consists in the study of transporting a given mass distribution $\mu_0$ on $\Om$ (that we assume to be a compact and convex subset of $\R^d$) 
into a final configuration $\mu_1$, by minimizing the total transportation cost, the latter being suitably defined: clearly, $\mu_0$ and $\mu_1$ are required to satisfy the mass balance condition $\int_\Om d\mu_0=\int_\Om d\mu_1$. From now on, we will assume that they are normalized to be probability measures. The cost for moving a unit mass from a position $x$ to a position $y$ is taken equal to $c(x,y)$, a function a priori given, which determines the nature of the problem and provides the total minimal cost
\begin{equation}\label{mincost}
C(\mu_0,\mu_1)=\min\left\{\int_{\Om\times\Om} c(x,y)\,d\gamma(x,y)\, :\, \gamma\in\Gamma(\mu_0,\mu_1)\right\}
\end{equation}
where $\Gamma(\mu_0,\mu_1)$ is the class of admissible {\it transport plans}, i.e. probabilities on the product space $\Om\times\Om$ having first and second marginals given by $\mu_0$ and $\mu_1$ respectively. The cases $c(x,y)=|x-y|^p$ with $p\ge1$ have been particularly studied, and the cost $C(\mu_0,\mu_1)$ in \eqref{mincost} provides, through the relation
$$
W_p(\mu_0,\mu_1)=\big(C(\mu_0,\mu_1)\big)^{1/p},
$$
the so-called {\it Wasserstein distance} $W_p$ which metrizes the weak* convergence on the space of probabilities $\P(\Om)$. A very wide literature on the subject is available; we simply mention the books \cite{AGS,vi1,vi2} where one can find a complete list of references.

Thanks to the fact that the space $\W_p(\Om)$ of probability measures endowed with these distances turns out to be a {\it geodesic space}, dynamical models for optimal transportation are of particular interest. Being a geodesic space means that the distance between two points is always equal to the infimum of the lengths of the curves connecting these points, and that this infimum is actually a minimum:
\[
W_p(\mu_0,\mu_1)=\min\left\{\int_0^1|\rho'_t|_{W_p}\,dt\, :\, \rho\in\Lip([0,1];\W_p(\Om)),\ \rho_0=\mu_0,\ \rho_1=\mu_1\right\},
\]
where $|\rho'|_{W_p}$ is the {\it metric derivative} of the measure-valued Lipschitz curve $\rho$, defined as (we refer the reader to \cite{AGS}, for more details)
\[
|\rho'_t|_{W_p}=\lim_{h\to 0}\frac{W_p(\rho_{t+h},\rho_t)}{h}.
\]

Since the curves connecting two points of this space are actually curves of measures, they can be described through the so-called {\it continuity equation}: it is well-known (see \cite{AGS}, Theorem 8.3.1) that for every Lipschitz or absolutely continuous curve $\rho_t$ in the space $\W_p(\Om)$ ($p>1$ for simplicity) there exists a map $q$ from $[0,1]$ into the space of vector valued measures, such that $q_t\ll\rho_t$ (hence $q_t=v_t\cdot \rho_t$, $v$ being the velocity vector) which represents the flux $q=\rho v$ and satisfies
\begin{equation}\label{coneq}
\partial_t\rho+\dive_xq=0\ \mbox{ and }\ \|v_t\|_{L^p(\rho_t)}=|\rho'_t|_{W_p},
\end{equation}
(the degenerate case $p=1$ being a little bit more involved, since $q_t\ll\rho_t$ is no more guaranteed and the $L^1-$norm has to be replaced by the mass of the measure $q_t$, see \cite{Am}).

On the other hand, every time that we have a pair $(\rho,q)$ satisfying $\partial_t\rho+\dive_xq=0$ with $q\ll\rho$, so that $q_t=v_t\cdot\rho_t$, we can infer that $|\rho'_t|_{W_p}\le\|v_t\|_{L^p(\rho_t)}$. This means that one can minimize the functional 
\begin{equation}\label{dyncost}
\A_p(\rho,q):=\begin{cases}\int_0^1\Big(\int_{\R^d}|v_t|^p\,d\rho_t\Big)\,dt&\mbox{ if } q\ll\rho\mbox{ and }q_t=v_t\cdot\rho_t,\\
+\infty&\mbox{ otherwise},\end{cases}
\end{equation}
which is nothing but the integral in time of the kinetic energy when $p=2$, and the cost in \eqref{mincost} can be recovered through the equality
$$
C(\mu_0,\mu_1)=\min\left\{\int_0^1\left(\int_{\R^d}\left|\frac{dq_t}{d\rho_t}(x)\right|^p\,d\rho_t(x)\right)\,dt\, :\, \partial_t\rho+\dive_xq=0,\ \rho_0=\mu_0,\ \rho_1=\mu_1\right\}.
$$
The problem above is the one which was proposed by Benamou and Brenier in \cite{bebr} as a dynamical version of optimal transportation. It has the advantage that it is the minimization of a convex functional of $\rho$ and $q$, under linear constraints. 

Other variants of mass transportation problems have been studied and can be expressed in this way by considering in \eqref{dyncost} other {\it convex} functions of the pair $(\rho,q)$. Recently, Dolbeault, Nazaret and Savar\'e introduced in \cite{donasa} new classes of distances over $\P(\R^d)$ based on the minimization of the functional (where $\lambda$ is a given reference measure on $\R^d$ and $\rho$ and $q$ are identified with their densities w.r.t. $\lambda$)
$$
\int_0^1\left(\int_{\R^d}\Phi(\rho,q)\,d\lambda\right)\,dt,\ \ \mbox{ where }\ \Phi(\rho,q)=\frac{|q|^p}{h(\rho)^{p-1}}=\left(\frac{|q|}{h(\rho)}\right)^p\!h(\rho),\ p\ge1,
$$
which are connected to the non-linear mobility continuity equation $\partial_t\rho+\dive_x\big(h(\rho)v\big)=0$ (a treatment of the limiting case $h(\rho)\equiv1$, corresponding to consider $\Phi(\rho,q)=|q|^p$, can also be found in \cite{bre}). If the function $h$ is concave (for example $h(\rho)=\rho^\beta$, with $\beta\in[0,1]$), this problem turns out to be convex as well. The main interest that motivated Dolbeault et al. to the study of these distances lies in the possible applications to diffusion equations of the type of the non-linear mobility continuity equation $\partial_t\rho+\dive_x\big(h(\rho)v\big)=0$ above, where the vector field $v$ depends on $\rho$ in a way such that the equation can be interpreted as a gradient flow of a given functional with respect to these new family of dynamical distances. 
Moreover, the equations of the geodesics (which are similar to a {\it mean-field game system}, see \cite{LL}) and the conditions for these distances to be finite are being studied in \cite{NazSan}. 

In connection with {\it congestion} effects and crowd motion, other models include penalizations on high densities: in \cite{bujiou} the case
$$\Phi(\rho,q)=\frac{|q|^p}{\rho^{p-1}}+c\rho^2\qquad p\ge1,\ c\ge0$$
has been considered as a model for crowd motion in a congested situation (for instance in case of panic). This problem as well is convex. 

A completely different situation occurs in the case opposite to congestion, when {\it concentration} effects are present and the mass has the interest to travel together as much as possible, in order to save part of the cost. This happens very often in many applications, as discovered by Gilbert who in \cite{gil} formulated a mathematical model for the transportation of signals along telephone cables. More recently the Gilbert's model has been refined and considered in the framework of mass transportation, under the name of {\it branched transport}, to emphasize the fact that transport rays may bifurcate. All these models have in common the fact that the cost for a mass $m$ moving on a path of length $\ell$ is proportional to $m^\alpha \ell$ ($0<\alpha<1$, so that $(m_1+m_2)^\alpha<m_1^\alpha+m_2^\alpha$). In \cite{becamo1,becamo2,becamo3,befi,mamoso} for every $0<\alpha<1$ a transportation cost from $\rho_0$ to $\rho_1$ is considered through a suitable use of probabilities defined on spaces of curves in $\Om$, with \cite{mamoso} (the so called {\it irrigation patterns model}) dealing with the case of a single source $\rho_0=\delta_{x_0}$. See Section 4 to have a glance at the details of these models and their formulations. On the other hand, the model of \cite{xi} can be seen as the natural extension of the original Gilbert's model and uses vector measures having prescribed divergence $\rho_0-\rho_1$: these vector measures are the continuous generalization of the finite weighted and oriented graphs that were present in Gilbert's original formulation. 

A first attempt to obtain a dynamical formulation of branched transportation through curves of measures was made in \cite{brbusa}, and later refined in \cite{bra,brsa}: in these papers the starting point is the geodesic formulation of the Wasserstein distance, where the length functional is modified considering an energy of the type 
$$
\int_0^1 g(\rho_t)\,|\rho'_t|_{W_p}\,dt.
$$ 
The weight function $g$ is a local term of the moving mass, forcing the mass to concentrate and thus giving raise to branching phenomena.

These models are not satisfactory yet, because they are in general not equivalent to those by Gilbert, Xia or Bernot-Caselles-Morel. A tentative to perform some modifications in the functionals defined on curves of measures so as to obtain equivalence with the other models has been made in [11], where on the other hand some quite involved distinction between moving mass and still mass has been done. In all of these models, the branched transportation is studied avoiding the Benamou-Brenier approach consisting in the minimization of a suitable cost $\F(\rho,q)$ under the constraint of the continuity equation $\partial_t\rho+\dive_xq=0$, that we believe {\it is the most natural for this kind of problems}. The only approach to dynamical branched transportation using the continuity equation is, as far as we know, the one of \cite{bose}. Yet, to prove semicontinuity and hence existence, even in this model, the problem is reduced to the minimization of a functional of the form
$$\int\theta^\alpha\,d\H^1(x,t)$$
(which is the energy of Xia in \cite{xi}) and the dynamical features are not completely exploited. 

In the present paper we follow a more direct approach: for all pairs $(\rho,q)$ verifying the continuity equation, with $\rho_0=\mu_0$ and $\rho_1=\mu_1$, we define a functional $\F(\rho,q)$ and we show that this functional is both lower semicontinuous and coercive with respect to a suitable convergence on $(\rho,q)$, and this provides directly the existence of an optimal dynamical path. The paper is organized as follows:
\begin{itemize}
\item in Section \ref{sset} we give the precise setting and state the main results;
\item Section \ref{spro} is devoted to the proofs giving the existence of an optimal path $\rho_t$;
\item in Section \ref{sequ} we show that our model is equivalent to the other models of branched transportation available in the literature, comparing it to the traffic plan model of \cite{becamo1}, which is one of the most flexible (and anyway equivalent to the others, as shown in \cite{becamo3}, Chapter 9);
\item in the Appendix we deal with some inequalities involving Wasserstein distances and branched distances, that is distances over the space of probabilities given by the minima of some branched transportation problems. These inequalities have already been studied in \cite{mosa} and \cite{deso}, but some very precise issues concerning $d_\alpha$ and $W_{1/\alpha}$ are very close to the topics of this paper and deserve being treated here. New and simpler proofs are provided.
\end{itemize}

\section{Problem setting and main results}\label{sset}

In this section we fix the notation and state the main results of the paper. In the following $\Omega$ will denote a given subset of $\R^d$, where all the mass dynamics will take place; for the sake of simplicity we assume that $\Omega$ is convex and compact. The space $\P(\Omega)$ of all Borel probabilities on $\Omega$ can then be endowed with the weak* convergence, which is metrized by the Wasserstein distances (see the Introduction). In the following, we will also use the notation $\M(\Om;\R^d)$ to indicate the space of $\R^d$-valued Radon measures over $\Om$, while $\sL^k$ will indicate the $k-$dimensional Lebesgue measure. 
\vskip.3cm
The main objects to be considered will be pairs $(\rho,q)$ with
\begin{equation}\label{chisiamo}
\rho\in C\big([0,1];\P(\Omega)\big),\ q\in L^1\big([0,1];\M(\Omega;\R^d)\big)
\end{equation}
satisfying the {\it continuity equation} formally written as (here $\nu$ stands for the outer normal versor to $\partial\Om$) 
\begin{equation}\label{coneq2}
\left\{\begin{array}{cccc}
\partial_t\rho+\dive_x q&=&0,& \mbox{ in } [0,1]\times \Om\\
q\cdot\nu&=&0,&\mbox{ on }[0,1]\times\partial\Omega,
\end{array}
\right.
\end{equation}
whose precise meaning is given in the sense of distributions, that is 
\begin{equation}\label{weakform}
\ds\int_0^1\Big[\int_\Omega\partial_t\phi(t,x)\,d\rho_t(x)+\int_\Omega D_x\phi(x,t)\cdot dq_t(x)\Big]\,dt=0
\end{equation}
for every smooth function $\phi$ with $\phi(0,x)=\phi(1,x)=0$.

\begin{defi}
We denote by $\mathfrak{D}$ the set of all pairs $(\rho,q)$ satisfying \eqref{chisiamo} and \eqref{weakform}. Moreover, given $\mu_0,\mu_1\in\P(\Omega)$, we define the set $\mathfrak{D}(\mu_0,\mu_1)$ of admissible configurations connecting $\mu_0$ to $\mu_1$ as
\[
\mathfrak{D}(\mu_0,\mu_1)=\big\{(\rho,q)\in\mathfrak{D}\ :\ \rho_0=\mu_0,\ \rho_1=\mu_1\big\}.
\]
\end{defi}
The velocity vector $v$ can be defined as the Radon-Nikodym derivative of the vector measure $q$ with respect to $\rho$:
$$v=\frac{dq}{d\rho}\;.$$
Among all pairs $(\rho,q)\in\mathfrak{D}$ satisfying the continuity equation above, we consider a cost function $\F(\rho,q)$ of the form
\begin{equation}\label{cost}
\F(\rho,q)=\int_0^1 F(\rho_t,q_t)\,dt,
\end{equation}
where $F$ is defined through
$$
F(\rho,q):=\begin{cases}G_\alpha(|v|^{1/\alpha}\cdot\rho)&\mbox{if }q=v\cdot\rho,\\
+\infty&\mbox{if $q$ is not absolutely continuous w.r.t. }\rho\end{cases}
$$
and $G_\alpha$ ($0<\alpha<1$) is a functional defined on measures, of the kind studied by Bouchitt\'e and Buttazzo in \cite{bobu}: $G_\alpha(\lambda)=+\infty$ if $\lambda$ is not purely atomic, while ($\#$ stands for the counting measure)
$$G_\alpha(\lambda)=\int_\Omega|\lambda(\{x\})|^\alpha\,d\#(x)=\sum_{i\in\N}|\lambda_i|^\alpha,\qquad\hbox{if }\lambda=\sum_{i\in\N}\lambda_i\delta_{x_i}.$$
In this way our functional $\F$ becomes
$$
\F(\rho,q)=\int_0^1\Big[\int_\Omega|v_t(x)|\rho_t(\{x\})^\alpha\,d\#(x)\Big]\,dt
=\int_0^1\Big[\sum_{i\in\N}|v_{t,i}|\rho_{t,i}^\alpha\Big]\,dt,
\qquad(\rho,q)\in\mathfrak{D},
$$
and the dynamical model for branched transport we consider is
\begin{equation}\label{minprob}
\B_{\alpha}(\mu_0,\mu_1):=\min_{(\rho,q)\in\mathfrak{D}(\mu_0,\mu_1)}\F(\rho,q).
\end{equation}
Our main goal is to show that the minimization problem \eqref{minprob} above admits a solution. This will be obtained through the direct methods of the calculus of variations, consisting in proving lower semicontinuity and coercivity of the problem under consideration, with respect to a suitable convergence.

\begin{rem}
We point out that the weak* convergence of the pairs $(\rho,q)$ does not directly imply the lower semicontinuity in \eqref{minprob}, since the functional is not jointly convex. On the other hand, if $(\rho^n,q^n)\in\mathfrak{D}$ and we assume
\[
(\rho^n_t,q^n_t)\rightharpoonup(\rho_t,q_t),\ \mbox{ for $\sL^1-$a.e. } t\in [0,1],
\]
then a simple application of Fatou's Lemma would lead to the desired semicontinuity property of $\F$ (because one could prove that $F$ is a lower semicontinuous functional on measures, as a consequence of the semicontinuity of $G_\alpha$ and of the convexity of $(x,y)\mapsto |x|^p/y^{p-1}$). 
\end{rem}

In order to prove in the easiest possible way a semicontinuity result, we will introduce a convergence which is stronger than the weak convergence of measures on $[0,1]\times\Omega$, but weaker than weak convergence for every fixed time $t$. This convergence will be compatible with the compactness we can infer from our variational problem.
%

\begin{defi}
We say that a sequence $(\rho^n,q^n)$ $\tau$-converges to $(\rho,q)$ if
\((\rho^n,q^n)\rightharpoonup (\rho,q)\) in the sense of measures and 
\[
\sup_{n\in\N,\ t\in[0,1]}F(\rho^n_t,q^n_t)<+\infty.
\]
\end{defi}

\begin{thm}\label{teocoe}
Let $(\rho^n,q^n)$ be a sequence such that $\F(\rho^n,q^n)\le C$, then up to a time reparametrization, $(\rho^n,q^n)$ is $\tau$-compact.
\end{thm}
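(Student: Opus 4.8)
The plan is to exploit the reparametrization invariance of $\F$ in order to upgrade the purely $L^1$-in-time bound $\F(\rho^n,q^n)\le C$ into the uniform \emph{pointwise} bound demanded by $\tau$-convergence. First I would record how the instantaneous cost transforms under a time change: if $s\colon[0,1]\to[0,1]$ is increasing and absolutely continuous and we set $\tilde\rho_t=\rho_{s(t)}$ and $\tilde q_t=s'(t)\,q_{s(t)}$, then $(\tilde\rho,\tilde q)$ still solves the continuity equation \eqref{weakform}, the velocity rescales as $\tilde v_t=s'(t)\,v_{s(t)}$, and hence
$$
F(\tilde\rho_t,\tilde q_t)=\sum_i|\tilde v_{t,i}|\,\tilde\rho_{t,i}^{\,\alpha}=s'(t)\,F(\rho_{s(t)},q_{s(t)}),
$$
while the change of variables $\tau=s(t)$ gives $\F(\tilde\rho,\tilde q)=\F(\rho,q)$. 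Thus $F$ plays the role of a speed and $\F$ that of a length, and the classical constant-speed reparametrization applies.

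Second, for each $n$ I would introduce the arc-length function $L_n(\tau)=\int_0^\tau F(\rho^n_\sigma,q^n_\sigma)\,d\sigma$, with $L_n(1)=E_n:=\F(\rho^n,q^n)\le C$, and take $s_n$ to be a monotone (generalized) inverse of $\tau\mapsto L_n(\tau)/E_n$. Along the reparametrized curve one then has $F(\tilde\rho^n_t,\tilde q^n_t)\equiv E_n\le C$ for a.e.\ $t$, which yields at once $\sup_{n,\,t}F(\tilde\rho^n_t,\tilde q^n_t)\le C$, i.e.\ the second requirement in the definition of $\tau$-convergence.

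Third, from this uniform pointwise bound I would extract the compactness. Each $\rho^n_t$ is a probability, so $\tilde\rho^n$ is a probability on the compact set $[0,1]\times\Om$ and a subsequence converges weak$*$. For the momenta I would use that every atomic mass obeys $0\le\rho_{t,i}\le1$ together with $\alpha<1$, whence $\rho_{t,i}\le\rho_{t,i}^{\,\alpha}$ and therefore
$$
|\tilde q^n_t|(\Om)=\sum_i|\tilde v^n_{t,i}|\,\tilde\rho^n_{t,i}\le\sum_i|\tilde v^n_{t,i}|\,(\tilde\rho^n_{t,i})^{\alpha}=F(\tilde\rho^n_t,\tilde q^n_t)\le C;
$$
integrating in $t$ bounds the total variation $|\tilde q^n|([0,1]\times\Om)\le C$, so the vector measures $\tilde q^n$ are weak$*$ precompact in $\M(\Om;\R^d)$-valued sense on the product space. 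Along a common subsequence $(\tilde\rho^n,\tilde q^n)\rightharpoonup(\rho,q)$, which together with the sup bound is exactly $\tau$-convergence. To ensure the limit is an admissible continuous curve, I would note in addition that $|\tilde q^n_t|(\Om)\le C$ forces, via the continuity equation tested against $\eta(t)\varphi(x)$ with $\varphi$ $1$-Lipschitz, the estimate $W_1(\tilde\rho^n_t,\tilde\rho^n_{t'})\le\int_t^{t'}|\tilde q^n_\sigma|(\Om)\,d\sigma\le C|t-t'|$; Arzel\`a--Ascoli then gives uniform convergence to a limit in $C([0,1];\P(\Om))$, and the linearity of \eqref{weakform} passes the continuity equation to the limit.

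The main obstacle I expect is the reparametrization step itself. The function $L_n$ need not be strictly increasing, since the curve may rest on sets of positive measure where $F$ vanishes; hence $s_n$ has to be defined as a monotone pseudo-inverse, and one must verify that it is absolutely continuous, that the rescaled pair still belongs to $\mathfrak{D}$, and that the pointwise identity $F(\tilde\rho^n_t,\tilde q^n_t)=E_n$ holds for a.e.\ $t$. The degenerate case $E_n=0$, corresponding to a constant curve, must be handled separately, and the measurability of $t\mapsto F(\rho^n_t,q^n_t)$ together with the change of variables underlying the speed/length transformation are the remaining technical points to be checked carefully.
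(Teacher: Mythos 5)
Your strategy coincides with the paper's: use the $1$-homogeneity of $F$ in the velocity to reparametrize each curve at constant speed, deduce the uniform bound $\sup_{n,t}F(\tilde\rho^n_t,\tilde q^n_t)\le C$, bound the masses ($\tilde\rho^n_t$ are probabilities, and $|\tilde q^n_t|(\Om)\le F(\tilde\rho^n_t,\tilde q^n_t)$), and extract a weak$*$ convergent subsequence on $[0,1]\times\Om$. Your individual variants are fine: the estimate $\rho_{t,i}\le\rho_{t,i}^\alpha$ is an equivalent route to the paper's inequality \eqref{Lalpha1}, and your $W_1$-equi-Lipschitz bound (obtained by testing the continuity equation against $\eta(t)\varphi(x)$ with $\varphi$ $1$-Lipschitz) plus Arzel\`a--Ascoli does give that the limit curve $\rho$ lies in $C([0,1];\P(\Om))$; the paper instead uses the stronger $W_{1/\alpha}$-Lipschitz bound coming from \eqref{Lalpha}, but $W_1$ suffices for this purpose. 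The technical caveats you list about the pseudo-inverse (intervals where $F$ vanishes, the case $E_n=0$) are real but standard, and the paper glosses over them too.

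There is, however, one genuine omission. Admissibility of the limit requires, by \eqref{chisiamo}, not only $\rho\in C\big([0,1];\P(\Om)\big)$ but also $q\in L^1\big([0,1];\M(\Om;\R^d)\big)$, i.e.\ that the weak$*$ limit $q$ --- which a priori is just a vector measure on the product $[0,1]\times\Om$ --- disintegrates as $q=\int q_t\,dt$ with respect to Lebesgue measure in time. You never verify this: without it, nothing excludes a priori that $q$ charges a single time slice $\{t_0\}\times\Om$, in which case the pair $(\rho,q)$ would not belong to $\mathfrak{D}$ and the extracted limit would be useless for Theorems \ref{teosci} and \ref{teoexi}. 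This is exactly the point the paper singles out as ``the only nontrivial point'' and settles for $q$ via the Benamou--Brenier functional: \eqref{Lalpha} gives a uniform bound on $\A_{1/\alpha}(\tilde\rho^n,\tilde q^n)$, lower semicontinuity of $\A_{1/\alpha}$ transfers that bound to the limit, hence $q\ll\rho$ with an $L^{1/\alpha}$ density, and since $\rho=\int\rho_t\,dt$ the same disintegration follows for $q$. Alternatively, your own uniform bound $|\tilde q^n_t|(\Om)\le C$ contains a fix: it says the time marginal of $|\tilde q^n|$ has density at most $C$ with respect to $\sL^1$, this bound survives the weak$*$ limit, so the time marginal of $|q|$ is absolutely continuous with bounded density, and the disintegration theorem then produces $q=\int q_t\,dt$ with $|q_t|(\Om)\le C$ for a.e.\ $t$. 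Either argument closes the gap, but one of them must be made explicitly.
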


\begin{thm}\label{teosci}
Let $(\rho^n,q^n)\in\mathfrak{D}$ be a sequence 
$\tau$-converging to $(\rho,q)$. Then
\[
\F(\rho,q)\le\liminf_{n\to\infty}\F(\rho^n,q^n).
\]
\end{thm}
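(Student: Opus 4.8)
The plan is to exploit the fact that, although $F$ is not jointly convex in $(\rho,q)$, for each fixed $\rho$ the map $q\mapsto F(\rho,q)=\int_\Omega\rho(\{x\})^{\alpha-1}\,d|q|(x)$ is convex and positively $1$-homogeneous, and admits the dual representation
\[
F(\rho,q)=\sup\Big\{\int_\Omega\xi\cdot dq\ :\ \xi\in C(\Omega;\R^d),\ |\xi(x)|\le\rho(\{x\})^{\alpha-1}\ \text{for every }x\in\Omega\Big\},
\]
with the convention $0^{\alpha-1}=+\infty$ (this is the Legendre--Fenchel, Bouchitt\'e--Buttazzo type representation of $G_\alpha$: the constraint is active only at the atoms of $\rho$, and forces $+\infty$ as soon as $q\not\ll\rho$). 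Integrating in time and commuting the supremum with the time integral by measurable selection, $\F(\rho,q)=\sup_\xi\int_0^1\int_\Omega\xi_t\cdot dq_t\,dt$, a supremum of functionals that are \emph{linear}, hence weakly* continuous, in the space-time measure $\bar q:=q_t\otimes dt$. The whole strategy is then to take an admissible $\xi$ for the limit $\rho$ and transfer it into an asymptotically admissible competitor for each $\rho^n$, using that $\rho^n\to\rho$ in a strong sense, so that the non-convexity, which lives only in the $\rho$-dependence of the constraint, becomes harmless.

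First I would upgrade the available compactness. From the defining bound of $\tau$-convergence, $\sup_{n,t}F(\rho^n_t,q^n_t)\le M$, and from $\rho^n_{t,i}\le1$ (so $\rho^n_{t,i}{}^{1-\alpha}\le1$), one gets $|q^n_t|(\Omega)=\sum_i|v^n_{t,i}|\rho^n_{t,i}\le\sum_i|v^n_{t,i}|\rho^n_{t,i}{}^{\alpha}\le M$ for all $n,t$. Inserting $|q^n_t|(\Omega)\le M$ into the weak formulation \eqref{weakform} shows that the curves $t\mapsto\rho^n_t$ are equi-Lipschitz in $(\P(\Omega),W_1)$; by Ascoli--Arzel\`a and uniqueness of the limit forced by the product convergence, $\rho^n_t\weak\rho_t$ for \emph{every} $t\in[0,1]$, uniformly in $t$. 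The same mass bound makes $\{q^n\}$ bounded in $L^\infty\big([0,1];\M(\Omega;\R^d)\big)$, the dual of $L^1\big([0,1];C(\Omega;\R^d)\big)$, so it is weakly* sequentially compact and its limit is identified, through the product convergence, with the time-disintegration $q_t$ of $\bar q$.

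The heart of the proof is the constraint transfer. Fix $\xi$ admissible for $\rho$; I may take $\xi_t$ bounded and supported near the atoms of $\rho_t$. I then want
\[
\int_\Omega\xi_t\cdot dq^n_t\ \le\ F(\rho^n_t,q^n_t)+\omega_n(t)\,|q^n_t|(\Omega),
\]
with $\omega_n(t)\to0$. Indeed, at an atom of $\rho^n_t$ close to an atom $x$ of $\rho_t$ of mass $m=\rho_t(\{x\})>0$, the converging atoms carry mass $\le m+o(1)$, so, since $s\mapsto s^{\alpha-1}$ is decreasing and continuous at $m>0$, the dual weight there is $\ge m^{\alpha-1}-o(1)\ge|\xi_t(x)|-o(1)$, which bounds $|\xi_t|$ at the nearby point up to $o(1)$ by continuity of $\xi_t$; at the remaining atoms of $\rho^n_t$ (those escaping towards the non-atomic part of $\rho_t$) $\xi_t$ is essentially zero, so they only improve the inequality. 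Integrating in $t$, using $|q^n_t|(\Omega)\le M$ and dominated convergence to absorb the error, and then passing to the weak* limit in the linear pairing, gives $\int_0^1\int_\Omega\xi_t\cdot dq_t\,dt\le\liminf_n\F(\rho^n,q^n)$; taking the supremum over admissible $\xi$ yields $\F(\rho,q)\le\liminf_n\F(\rho^n,q^n)$.

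The main obstacle is precisely the uniformity in $t$ of this transfer: the strong convergence of $\rho^n$ is only controlled in $W_1$ uniformly in time, while the estimate above requires control on the positions and masses of the individual atoms, again uniformly in time, so that $\omega_n(t)$ be genuinely infinitesimal after integration. This is exactly where the full strength of $\tau$-convergence (the uniform bound $\sup_{n,t}F<+\infty$, not merely product weak* convergence) enters, together with the equicontinuity of the $\rho^n$; the possible presence of a diffuse part of $\rho_t$ must also be handled, to guarantee that an admissible $\xi$ is effectively tested only on the atomic part, where the dual weight is finite. A secondary point is to justify the dual representation of $F$ with the singular weight $0^{\alpha-1}=+\infty$, which is the Bouchitt\'e--Buttazzo semicontinuity input alluded to in the Remark, and to verify that it correctly encodes $F=+\infty$ when $q\not\ll\rho$.
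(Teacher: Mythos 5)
There is a genuine gap, and it sits exactly where the paper's proof does its real work. Your dual representation $F(\rho,q)=\sup\big\{\int\xi\cdot dq\,:\,|\xi(x)|\le\rho(\{x\})^{\alpha-1}\big\}$ is correct only because the weight $\rho(\{\cdot\})^{\alpha-1}$ is $+\infty$ off the atoms of $\rho$, i.e.\ because the class contains test fields that are arbitrarily large away from the atoms: these are precisely the $\xi$'s that force $F=+\infty$ when $q$ is not absolutely continuous w.r.t.\ $\rho$, or when $q\ll\rho$ but charges the diffuse part of $\rho$. Your transfer step, however, is only sketched for $\xi_t$ ``bounded and supported near the atoms of $\rho_t$''. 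Over that restricted class the supremum is no longer $\F(\rho,q)$ but only $\int_0^1\int_{A_t}\rho_t(\{x\})^{\alpha-1}\,d|q_t|\,dt$, where $A_t$ is the (countable) atom set of $\rho_t$; so at best your argument yields that this restricted quantity is $\le\liminf_n\F(\rho^n,q^n)$. That is strictly weaker than the theorem unless one proves separately that, for a.e.\ $t$, $q_t\ll\rho_t$ and $\rho_t\llcorner\{v_t\ne 0\}$ is purely atomic. Saying that $\xi$ ``is effectively tested only on the atomic part'' begs the question: whether $q$ has mass off the atomic part is exactly the degenerate case (where $\F(\rho,q)=+\infty$) that a lower semicontinuity proof must rule out. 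The paper handles it with two substantive arguments that have no counterpart in your proposal: the uniform bound on the Benamou--Brenier functional $\A_{1/\alpha}(\rho^n,q^n)$, which survives the limit and gives $q\ll\rho$ with $L^{1/\alpha}$ density, and the set inequality $\m_a(S)\ge\rho_a(S)^{\alpha-1}|q_a|(S)$ applied on $V_\ve=\{|v_a|>\ve\}$ together with the dyadic-grid Lemma \ref{atomic} (semicontinuity of $G_\alpha$) to get atomicity.

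Second, even on the restricted class your error term $\omega_n(t)$ is asserted rather than constructed. The $o(1)$ in ``converging atoms carry mass $\le m+o(1)$'' depends on the atom $x$, on its mass $m$ (the modulus of continuity of $s\mapsto s^{\alpha-1}$ at $s=m$ blows up as $m\to 0$), on the localization radius, and on $t$; since $\rho_t$ may have infinitely many atoms with masses accumulating at $0$, and an admissible $\xi_t$ may be correspondingly large near the small atoms, there is no single infinitesimal, integrable-in-$t$ error controlling all of them, and you yourself flag this uniformity as ``the main obstacle'' without resolving it. This is precisely the difficulty the paper's proof is built to bypass: rather than transferring constraints at fixed $t$, it packages the energies into space--time measures $\m^n\weak\m$, tests against $\phi(t,x)=\chi_M(x)^\alpha 1_{[a,b]}(t)$ (upper semicontinuous for the $\m$-side, with an l.s.c.\ minorant for the $|q|$-side), uses the uniform bound $F(\rho^n_t,q^n_t)\le C$ from $\tau$-convergence only through the $W_{1/\alpha}$-Lipschitz estimate making $\int\chi_M\,d\rho^n_t$ nearly constant on $[a,b]$, and then localizes by dividing by $b-a$ at Lebesgue points and letting $M\to\infty$ to reach $\m_a(Q)\ge\rho_a(Q)^{\alpha-1}|q_a|(Q)$. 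To salvage your duality route you would need both a quantitative transfer lemma uniform over all admissible $\xi$ and all $t$, and the missing absolute-continuity/atomicity step; carrying these out essentially amounts to reconstructing the paper's argument.
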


As a consequence we obtain the following existence result.

\begin{thm}\label{teoexi}
For every $\mu_0,\mu_1\in\P(\Om)$, the minimization problem \eqref{minprob} admits a solution.
\end{thm}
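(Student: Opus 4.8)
The plan is to apply the direct method of the calculus of variations, using the coercivity of Theorem~\ref{teocoe} and the lower semicontinuity of Theorem~\ref{teosci} as the two essential ingredients. First I would observe that $\mathfrak{D}(\mu_0,\mu_1)$ is never empty: since $\mu_0$ and $\mu_1$ are probabilities with the same (unit) mass, they can always be joined by a pair satisfying the continuity equation --- for instance the constant-speed displacement interpolation, which provides a continuous curve $\rho$ together with a momentum $q\in L^1$ --- so the infimum in \eqref{minprob} is well defined. If this infimum equals $+\infty$ there is nothing to prove, every admissible pair then being a minimizer; hence I may assume it is finite and pick a minimizing sequence $(\rho^n,q^n)\in\mathfrak{D}(\mu_0,\mu_1)$ with $\F(\rho^n,q^n)\le C$ for some constant $C$.

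The next step is to extract a limit. By Theorem~\ref{teocoe}, after a time reparametrization the sequence is $\tau$-compact, so along a subsequence $(\rho^n,q^n)\rightharpoonup(\rho,q)$ in the sense of measures. The point to check carefully here is that the reparametrization does not destroy the minimizing character of the sequence, and this rests on the homogeneity of $F$. Indeed $F$ is positively $1$-homogeneous in $q$, since $F(\rho,cq)=cF(\rho,q)$ for $c>0$ (the speed $|v|$ being linear in $q$). Consequently a change of time variable $t=\psi(s)$ paired with the rescaling $\tilde\rho_s=\rho_{\psi(s)}$, $\tilde q_s=\psi'(s)\,q_{\psi(s)}$ preserves the continuity equation, fixes the endpoints, and by the substitution $t=\psi(s)$ gives $\F(\tilde\rho,\tilde q)=\int_0^1\psi'(s)\,F(\rho_{\psi(s)},q_{\psi(s)})\,ds=\F(\rho,q)$. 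Thus the reparametrized sequence still lies in $\mathfrak{D}(\mu_0,\mu_1)$ and is still minimizing, and I may work with it directly.

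It then remains to verify that the limit $(\rho,q)$ is admissible and optimal. Membership in $\mathfrak{D}$ is immediate, because the weak formulation \eqref{weakform} is linear in the pair $(\rho,q)$ and hence stable under weak* convergence. To see that the boundary conditions $\rho_0=\mu_0$ and $\rho_1=\mu_1$ pass to the limit, I would invoke the equicontinuity in time of the curves $t\mapsto\rho^n_t$ that comes from the bounded energy (this estimate is exactly the information produced inside the proof of Theorem~\ref{teocoe}): it upgrades the convergence of measures on $[0,1]\times\Om$ to the convergence $\rho^n_t\rightharpoonup\rho_t$ for every fixed $t$, in particular at $t=0$ and $t=1$, whence $\rho_0=\mu_0$ and $\rho_1=\mu_1$. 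Finally, Theorem~\ref{teosci} yields $\F(\rho,q)\le\liminf_n\F(\rho^n,q^n)=\inf\F$, and since $(\rho,q)\in\mathfrak{D}(\mu_0,\mu_1)$ the reverse inequality is automatic; therefore $(\rho,q)$ realizes the minimum.

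The main obstacle I anticipate is the interaction between the time reparametrization needed for compactness and the endpoint constraints: one must ensure that the reparametrizations do not concentrate the time measure near $s=0$ or $s=1$ in a way that would alter $\mu_0,\mu_1$ in the limit, and that the curves remain uniformly continuous in time so that the traces at the endpoints converge. Both issues are exactly what the $1$-homogeneity of $F$ and the time-equicontinuity of bounded-energy curves are meant to control, so the delicate part of the argument is entirely inherited from Theorem~\ref{teocoe} rather than being new to this existence statement.
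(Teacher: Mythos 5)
Your proposal is correct and follows essentially the same route as the paper: a minimizing sequence, Theorem~\ref{teocoe} for $\tau$-compactness after reparametrization (legitimate by the $1$-homogeneity of $F$ in $q$), stability of the linear constraints under weak convergence, and Theorem~\ref{teosci} for lower semicontinuity. Your elaborations---the trivial case of infinite infimum, and the use of the uniform $W_{1/\alpha}$-Lipschitz bound to upgrade weak convergence on $[0,1]\times\Om$ to convergence of $\rho^n_t$ at the endpoints $t=0,1$---are details the paper compresses into the remark that ``the constraints are linear,'' and they are handled correctly.
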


\begin{rem}
We remark that, for some choices of the data $\mu_0,\mu_1$ and of the exponent $\alpha$, the statement of Theorem \ref{teoexi} could be empty, because the functional $\F$ could be constantly $+\infty$ on every admissible path $(\rho,q)$ joining $\mu_0$ to $\mu_1$. This issue will be solved in Section \ref{sequ}, where the equivalence with other variational models for branched transportation will be proven. Since for these models finiteness of the minima has been widely investigated, we can infer for instance that if $\alpha> 1-1/d$ then every pair $\mu_0$ and $\mu_1$ can be joined by a path of finite energy. On the other hand, if $\alpha\le 1-1/d$, $\mu_0=\delta_{x_0}$ and $\mu_1$ is absolutely continuous w.r.t. $\sL^d$, then there are no finite energy paths connecting them. 
\end{rem}

\section{Proofs}\label{spro}

A preliminary inequality to all the proofs is the following: if $q\ll\rho$, then $q_t=v_t\cdot \rho_t$ and
\begin{equation}\label{Lalpha}
\begin{split}
F(\rho_t,q_t)&=\sum_i\rho_t(\{x_i\})^\alpha|v_t(x_i)|
=\sum_i\left(\rho_t(\{x_i\}) |v_t(x_i)|^{1/\alpha}\right)^\alpha\\
&\ge\left(\sum_i\rho_t(\{x_i\}) |v_t(x_i)|^{1/\alpha}\right)^\alpha=\|v_t\|_{L^{1/\alpha}(\rho_t)},
\end{split}
\end{equation}
due to the sub-additivity of the function $x\mapsto x^\alpha$. This inequality and its consequences will be discussed in the Appendix as well. In particular it also follows
\begin{equation}\label{Lalpha1}
F(\rho_t,q_t)\ge\|v_t\|_{L^{1}(\rho_t)}=|q_t|(\Omega).
\end{equation}

\subsection{Proof of Theorem \ref{teocoe}}
Due to the fact that the functional $\F$ is $1$-homogeneous in the velocity, it is clear that reparametrizations in time do not change the values of $\F$. By reparametrization, we mean replacing a pair $(\rho,q)$ with a new pair $(\tilde \rho,\tilde q)$ of the form $\tilde\rho_t=\rho_{\varphi(t)}$, $\tilde q_t=\varphi'(t)q_{\varphi(t)}$ (which equivalently means that $\tilde q$ is the image measure of $q$ through the inverse of the map $(t,x)\mapsto (\varphi(t),x)$). Thanks to this invariance, if $(\rho^n,q^n)$ is such that $\F(\rho^n,q^n)\le C$, then one can define a new pair $(\tilde\rho^n,\tilde q^n)$, with 
$$
F(\tilde\rho^n_t,\tilde q^n_t)=\F(\tilde\rho^n,\tilde q^n)=\F(\rho^n,q^n)\le C,\ \mbox{ for every } t. 
$$
After that, we only need to prove compactness for the weak convergence of measures on $[0,1]\times\Omega$, a fact which only requires bounds on the mass of $\tilde\rho^n$ and $\tilde q^n$. The bound on $\tilde\rho^n$ is straightforward, since for every $t$ the measure $\tilde\rho^n_t$ is a probability, while for $\tilde q^n$, which is absolutely continuous w.r.t. $\tilde\rho^n$, it is enough to use \eqref{Lalpha1} in order to bound the mass of $q$ by $C$. 

This allows to extract a subsequence $(\tilde\rho^{n_k}_t,\tilde q^{n_k}_t)$ which converges weakly to a pair $(\rho,q)$. The only nontrivial point is that we a priori restricted our attention to pairs $(\rho,q)$ where $\rho\in C\big([0,1];\P(\Omega)\big)$ and $q\in L^1\big([0,1];\M(\Omega;\R^d)\big)$, so that we need to prove that $\rho$ is continuous and that $q$ is of the form $\int q_t\,dt$. Yet, the inequality \eqref{Lalpha} applied to the pairs $(\tilde\rho^n,\tilde q^n)$, proves a uniform bound on the $L^{1/\alpha}$ norm of the velocities, which implies that the curves $\tilde\rho^n$ are uniformly Lipschitz continuous according to the distance $W_{1/\alpha}$, and this property is inherited by the limit measure $\rho$. 

For the decomposition of $q$, just use the inequality \eqref{Lalpha}, thus obtaining a uniform bound on $\|v^n_t\|_{L^{1/\alpha}(\rho^n_t)}$, which a fortiori gives a uniform bound on the Benamou-Brenier functional 
$$
\A_{1/\alpha}(\rho^n,q^n)=\int_0^1\|v^n_t\|_{L^{1/\alpha}(\rho^n_t)}^{1/\alpha}\,dt.
$$ 
This functional being lower semicontinuous, we can deduce the same bound at the limit: this in particular implies that $q$ is absolutely continuous w.r.t. $\rho$, with an $L^p$ density. Since $\rho$ is a measure on $[0,1]\times\Omega$ which is of the form $\int\rho_t\,dt$, the same disintegration will be true for $q$. 

This means that we have actually found an admissible pair $(\rho,q)$ which is the $\tau-$limit of $(\tilde\rho^{n_k}_t,\tilde q^{n_k}_t)$ and the proof is complete.

\subsection{Proof of Theorem \ref{teosci}}
We consider here a sequence $(\rho^n,q^n)$, where $q^n=v^n\cdot\rho^n$ (otherwise the functional $\F$ would not be finite-valued), satisfying the continuity equation and such that $(\rho^n,q^n)$ $\tau-$converges to $(\rho,q)$.

First of all we define a sequence of measures $\mathfrak{m}^n$ on $[0,1]\times \Omega$ through 
$$ 
\m^n=\int\left(\sum_i\rho^n_t(\{x_{i,t}\})^\alpha |v^n_t(x_{i,t})|\delta_{x_{i,t}}\right)\,dt,
$$
where the points $x_{i,t}$ are the atoms of $q^n_t$ (i.e. the atoms of $\rho^n_t$ where the velocity $v^n_t$ does not vanish). We notice that $\F(\rho^n,q^n)=\m^n([0,1]\times\Omega)$. 

In order to prove lower semicontinuity of $\F$ we can assume $\F(\rho^n,q^n)$ to be bounded. This bound implies the convergence $\m^n\weak\m$, up to the extraction of a subsequence (not relabeled). It is clear that, on this subsequence, we have 
$$
\lim_{n\to\infty}\F(\rho^n,q^n)=\lim_{n\to\infty} \m^n([0,1]\times\Om)=\m([0,1]\times\Omega),
$$ 
then in order to prove the desired semicontinuity property, it is enough to get some proper lower bounds on $\m$. 

Notice that, since we have $\m^n([a,b]\times\Omega)=\int_a^b F(\rho^n_t,q^n_t)\,dt$ and $F(\rho^n_t,q^n_t)\le C$ (by definition of $\tau-$convergence), we know that the marginal of $\m^n$ on the time variable is a measure with an $L^\infty$ density bounded by the constant $C$ on $[0,1]$. This bound is uniform and will be then satisfied by the limit measure $\m$ too: in particular, this implies that we can write $\m=\int\m_t\,dt$.

Let us fix a closed set $Q$, as well as a time interval $[a,b]$, and take the function 
$$
\chi_M(x):=(1-M\,\dist(x,Q))_+,\ x\in\Om,
$$ 
where $(\,\cdot\,)_+$ stands for the positive part: observe that $\chi_M$ is positive, takes the value $1$ on $Q$, is $M$-Lipschitz and vanishes outside a $1/M$-neighborhood of $Q$.
Indicating by $1_E$ the characteristic function of a generic set $E$, i.e. the function which takes the value $1$ on $E$ and $0$ elsewhere, we consider $\phi(t,x)=\chi_M(x)^\alpha 1_{[a,b]}(t)$, which is upper semicontinuous on $[0,1]\times\Omega$. Then we have
\[
\begin{split}
\int\phi(t,x)\,d\m(t,x)&\ge\limsup_{n\to\infty}\int\phi(t,x)\,d\m^n(t,x)\\
&=\limsup_{n\to\infty}\int_a^b\left(\sum_i\rho^n_t(\{x_i\})^\alpha|v^n_t(x_i)|\chi_M(x_i)^\alpha\right)\,dt,
\end{split}
\]
where the points $x_i$ are, as before, the atoms of $q^n$ (and we omitted the dependence on $n$ and $t$). We then decompose the product $\rho^n_t(\{x_i\})^\alpha\chi_M(x_i)^\alpha$ as $\big(\rho^n_t(\{x_i\})\chi_M(x_i)\big)^{\alpha-1}\cdot \big(\rho^n_t(\{x_i\})\chi_M(x_i)\big)$ (where $\rho^n_t\chi_M>0$). Notice that $\rho^n_t(\{x_i\})\chi_M(x_i)\le\int\chi_M\,d\rho^n_t$. Then we can estimate the right-hand side in the previous inequality as
\[
\begin{split}
\int_a^b\left(\sum_i\rho^n_t(\{x_i\})^\alpha|v^n_t(x_i)|\chi_M(x_i)^\alpha\right)\,dt
&\ge\int_a^b\left(\int\chi_M\,d\rho^n_t\right)^{\alpha-1}\!\!\left(\sum_i\rho^n_t(\{x_i\})|v^n_t(x_i)|\chi_M(x_i)\right)\,dt\\
&=\int_a^b\left(\int\chi_M\,d\rho^n_t\right)^{\alpha-1}\left(\int\!\!\chi_M\,d|q^n_t|\right)\,dt.
\end{split}
\]
We go on by estimating from above $\int\chi_M\,d\rho^n_t$: we have
$$
\int_\Om\chi_M(x)\,d\rho^n_t(x)\le\int_\Om\chi_M(x)\,d\rho^n_s(x)+M\,W_1(\rho^n_t,\rho^n_s),
$$
which is a consequence of the definition of $W_1$ by duality with $1$-Lipschitz functions (see \cite{vi1}, Theorem 1.14). To estimate the $W_1$ distance we use $W_1\le W_{1/\alpha}$ and the following fact 
\[
W_{1/\alpha}(\rho^n_t,\rho^n_s)\le\int_s^t|(\rho^n_z)'|_{w_{1/\alpha}}\,dz\le\int_s^t\|v^n_z\|_{L^{1/\alpha}(\rho_z)}\,dz,
\]
then applying inequality \eqref{Lalpha} we have in the end
$$ 
\int_\Om\chi_M(x)\,d\rho^n_t(x)\le\int_\Om\chi_M(x)\,d\rho^n_a(x)+CM(b-a),\ \mbox{ for every }t\in[a,b].
$$
In this way we have 
\[
\begin{split}
\int_a^b\left(\int\chi_M\,d\rho^n_t\right)^{\alpha-1}\!\!\left(\int\chi_M d|q^n_t|\,dt\right)\,dt
&\ge\left(\int\chi_M\,d\rho^n_a\!+\!CM(b-a)\!\!\right)^{\alpha-1}\!\!\!\int_a^b\left(\int\!\!\chi_M\,d|q^n_t|\right)\,dt\\
&=\left(\int\chi_M\,d\rho^n_a+CM(b-a)\right)^{\alpha-1}\int\phi^{1/\alpha}\,d|q^n|.
\end{split}
\]
Hence, we may go on with
\[
\begin{split}
\int\phi\,d\m&\ge\limsup_{n\to\infty}\left[\left(\int\chi_M\,d\rho^n_a+CM(b-a)\right)^{\alpha-1}\int\phi^{1/\alpha}\,d|q^n|\right]\\
&\ge\left(\int\chi_M\,d\rho_a\!+\!CM(b-a)\right)^{\alpha-1}\int\phi^{1/\alpha}\,d|q|.
\end{split}
\]
In the last inequality the second factor has been dealt with in the following way: suppose $|q^n|\deb\sigma$, then we have $\sigma\ge|q|$; moreover $\phi\ge\tilde\phi$ where $\tilde\phi(t,x):=\chi_M(x) 1_{(a,b)}(t)$, and this last function is l.s.c. and positive, so that 
$$
\liminf_{n\to\infty}\int\phi^{1/\alpha}\,d|q^n|
\ge\liminf_{n\to\infty}\int\tilde\phi^{1/\alpha}\,d|q^n|
\ge\int\tilde\phi^{1/\alpha}\,d\sigma
\ge\int\tilde\phi^{1/\alpha}\,d|q|
=\int\phi^{1/\alpha}\,d|q|,
$$ 
since the boundaries $t=a$ and $t=b$ are negligible for $|q|$.

After that, we can divide by $(b-a)$ (keeping for a while $M$ fixed) and pass to the limit as $b\to a$. This gives, for $\sL^1-$a.e. $a\in [0,1]$, 
$$
\int\chi_M(x)^\alpha\,d\m_a(x)\ge\left(\int\chi_M(x)\,d\rho_a(x)\right)^{\alpha-1}\int\chi_M(x)\,d|q_a|(x).
$$
We let now $M\to\infty$, so that $\chi_M$ monotonically converges to the characteristic function of the set $Q$, and we have, by dominated convergence w.r.t. $\m_a, \rho_a$ and $|q_a|$,
\begin{equation}\label{forQ}
\m_a(Q)\ge\rho_a(Q)^{\alpha-1}|q_a|(Q).
\end{equation}
In the last term the convention $0\cdot \infty=0$ is used (if $|q_a|(Q)=0$). This inequality is proven for closed sets, but by regularity of the measures it is not difficult to prove it for arbitrary sets. Actually, if $S\subset\Omega$ is an arbitrary Borel set, we can write
$$ 
\m_a(S)\ge\m_a(Q)\ge\rho_a(Q)^{\alpha-1}|q_a|(Q)\ge\rho_a(S)^{\alpha-1}|q_a|(Q),
$$
for every $Q\subset S$ closed, and take a sequence of closed sets $Q_k$ such that $|q_a|(Q_k)\to|q_a|(S)$, since $|q_a|$ is, for $\sL^1-$a.e. $a\in[0,1]$, a finite (and hence regular) measure on the compact set $\Om$. We want now to prove that
\begin{itemize}
\item $q\ll\rho$,
\item $q_a=v_a\cdot\rho_a$ is atomic for $\sL^1-$a.e. $a\in [0,1]$ (i.e. $\rho_a$ is atomic on $\{v_a\neq 0\}$)
\item $\m_a(\Omega)\ge F(\rho_a,q_a)$, for $\sL^1-$a.e. $a\in [0,1]$.
\end{itemize}
This would conclude the proof.

The first statement follows from the inequality \eqref{Lalpha}, and the behavior of the Benamou-Brenier functional $\A_{1/\alpha}(\rho^n,q^n)$ as in the proof of Theorem \ref{teocoe}, which guarantees an $L^{1/\alpha}$ density of $q$ w.r.t. $\rho$. As a consequence, since $\rho$ is a measure on $[0,1]\times\Omega$ which disintegrates w.r.t. the Lebesgue measure on $[0,1]$, the same will be true for $q$ and we can write $q_t=v_t\cdot\rho_t$.

For the second statement, take the inequality $\m_a(S)\ge\rho_a(S)^{\alpha-1}|q_a|(S)$ which is valid for any Borel set $S$, and apply it to sets which are contained in the Borel set $V_\ve:=\{x\in\Omega\,:\,|v_a(x)|>\ve\}$. For those sets, we have easily $\m_a(S)\ge\ve\rho_a(S)^\alpha$. This means that the measure $\lambda:=\ve^{1/\alpha}\rho_a\llcorner V_\ve$ satisfies the inequality $\lambda(S)^\alpha\le\m(S)$ for every Borel set $S\subset V_\ve$. Since $\m$ is a finite measure, this implies that $\lambda$ is atomic (see Lemma \ref{atomic} below). 
If the same is performed for every $\ve=1/k$, this proves that $\rho_a$ is purely atomic on the set $\{x\, :\, |v_a(x)|\neq 0\}$, that is $q_a=v_a\cdot\rho_a$ is purely atomic.

Once we know that $q_a$ is atomic we infer that $F(\rho_a,q_a)=\sum_i\rho_a(\{x_i\})^{\alpha-1}|q_a|(\{x_i\})$ and we only need to consider $Q=\{x_i\}$ in \eqref{forQ} and add up:
$$ 
\m_a(\Omega)\ge\sum_i\m_a(\{x_i\})\ge F(\rho_a,q_a),
$$
which finally concludes the proof.

\begin{lem}\label{atomic}
Take two finite positive measures $\lambda$ and $\mu$ on a domain $\Omega$, and $\alpha\in(0,1)$. Suppose that the inequality $\lambda(S)^\alpha\le\mu(S)$ is satisfied for every Borel set $S\subset\Omega$. Then $\lambda$ is purely atomic. 
\end{lem}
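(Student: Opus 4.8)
The plan is to argue by contradiction, exploiting the fact that the concavity $\alpha<1$ makes the quantity $\sum_j\mu(S_j)\ge\sum_j\lambda(S_j)^\alpha$ blow up when a set of positive $\lambda$-measure is split into many pieces of small mass, while $\mu$ remains finite. The superadditivity of $x\mapsto x^\alpha$ is precisely what penalizes spreading mass over many small sets, so a non-atomic piece of $\lambda$ cannot survive the hypothesis.

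First I would reduce to the non-atomic part. Write $\{x_i\}$ for the (at most countable) collection of atoms of $\lambda$ and set $A=\Omega\setminus\bigcup_i\{x_i\}$, so that the restriction $\lambda\res A$ is a non-atomic finite measure, and proving that $\lambda$ is purely atomic amounts to showing $\lambda(A)=0$. Suppose instead, for contradiction, that $m:=\lambda(A)>0$.

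Then, for each integer $N$, I would use the classical intermediate-value property of non-atomic measures (Sierpi\'nski's theorem) to partition $A$ into $N$ disjoint Borel sets $S_1,\dots,S_N$ with $\lambda(S_j)=m/N$ for every $j$. This is obtained inductively: the non-atomicity of $\lambda\res A$ guarantees, at each step, a Borel subset of the remaining portion carrying exactly the mass $m/N$. Applying the hypothesis to each piece gives $\mu(S_j)\ge\lambda(S_j)^\alpha=(m/N)^\alpha$, and since the $S_j$ are disjoint, summing yields
\[
\mu(\Omega)\ge\sum_{j=1}^N\mu(S_j)\ge N\left(\frac{m}{N}\right)^\alpha=N^{1-\alpha}\,m^\alpha.
\]
Because $1-\alpha>0$, the right-hand side diverges as $N\to\infty$, contradicting the finiteness of $\mu$. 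Hence $m=0$ and $\lambda$ is purely atomic.

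The routine parts are the atomic/non-atomic reduction and the final summation estimate; the one step deserving care --- and the main obstacle --- is the equipartition, i.e. justifying that a non-atomic measure can be carved into $N$ pieces of equal (or at least uniformly small) mass. This is exactly where the non-atomicity of $\lambda\res A$ is used, and it is the only place the argument invokes a nontrivial classical fact; everything else is a direct consequence of the standing hypothesis $\lambda(S)^\alpha\le\mu(S)$ together with the strict inequality $1-\alpha>0$.
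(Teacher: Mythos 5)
Your proof is correct, and it follows a genuinely different route from the paper's. The paper argues by approximation: it overlays a regular grid of step $1/k$ on $\Omega$, collapses the $\lambda$-mass of each cell to a single point to produce atomic measures $\lambda_k$, observes that the hypothesis gives the uniform bound $G_\alpha(\lambda_k)=\sum_i\lambda(S_i)^\alpha\le\sum_i\mu(S_i)=\mu(\Omega)<+\infty$, and then invokes the Bouchitt\'e--Buttazzo lower semicontinuity of $G_\alpha$ under weak convergence $\lambda_k\weak\lambda$ to conclude $G_\alpha(\lambda)\le\mu(\Omega)$, which forces $\lambda$ to be purely atomic. You instead argue by contradiction: after splitting off the (countably many) point atoms, you apply Sierpi\'nski's intermediate-value theorem to equipartition the non-atomic part of mass $m>0$ into $N$ pieces of mass $m/N$, and the hypothesis then yields $\mu(\Omega)\ge N^{1-\alpha}m^\alpha\to+\infty$, a contradiction; all the steps (the atomic/non-atomic decomposition, the inductive equipartition, the summation) are sound. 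The trade-off is this: your argument is more elementary and self-contained --- it needs no grid, no weak convergence, hence no topology or geometry of $\Omega$, and no external semicontinuity theorem, so it works verbatim on an abstract measure space. The paper's argument, by contrast, is shorter in context because the semicontinuity of $G_\alpha$ from \cite{bobu} is already a standing tool of the paper, and it yields directly the quantitative conclusion $G_\alpha(\lambda)\le\mu(\Omega)$, which is slightly stronger than bare atomicity (though once atomicity is known, your approach recovers this bound at once by applying the hypothesis to each atom and summing).
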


\begin{proof}
Consider a regular grid on $\Omega$ of step $1/k$, for $k\in\N$, and build a measure $\lambda_k$ by putting, in every cell of the grid, all the mass of $\lambda$ in a single point of the cell. This measure $\lambda_k$ is atomic and we have 
$$
G_\alpha(\lambda_k)=\sum_i\lambda(S_i)^\alpha\le\sum_i\mu(S_i)=\mu(\Omega)<+\infty,
$$
where the $S_i$ are the cells of the grid. If we let $k$ goes to $\infty$, the step of the grid goes to zero and we obviously have $\lambda_k\deb\lambda$. On the other hand, the functional $G_\alpha$ is lower semicontinuous (see \cite{bobu}) and this implies $G_\alpha(\lambda)\le\liminf_{k\to\infty}G_\alpha(\lambda_k)\le\mu(\Omega)<+\infty.$ In particular, $\lambda$ is atomic, thus proving the assertion.
\end{proof}

\subsection{Proof of Theorem \ref{teoexi}}
In order to prove existence, one only needs to take a minimizing sequence and apply Theorem \ref{teocoe} to get a new minimizing sequence which is $\tau-$converging: this new sequence is obtained through reparametrization (which does not change the value of $\F$) and by extracting a subsequence. Since the constraints in the problem are linear, i.e. $\rho_i=\mu_i$ for $i=0,1$ and the continuity equation, the limit $(\rho,q)$ will satisfy the same constraints as well. The semicontinuity proven in Theorem \ref{teosci} allows to obtain the existence of a solution.

\section{Equivalence with previous models}\label{sequ}

In this section we prove the equivalence of problem \eqref{minprob} with the other previous formulations of branched transport problems, existing in literature. In particular, as a reference model we will take the one presented in \cite{becamo3}, in which the energy is defined as
\[
E_\alpha(Q)=\int_{\cC}\int_0^1|\si(t)|^{\alpha-1}_Q\,|\si'(t)|\,dt\,dQ(\si),
\]
where $\cC=C([0,1];\Om)$, $Q$ is probability measure over $\cC$ and concentrated on the set $\mathrm{Lip}([0,1];\Om)$ ({\it traffic plan})
and for every $x\in\Om$, the quantity $|x|_Q$ is the {\it multiplicity} of $x$ with respect to $Q$, defined by
\[
|x|_Q=Q\left(\left\{\si\in\cC\ :\ x\in\si([0,1])\right\}\right).
\]
Given $\mu_0,\mu_1\in\P(\Om)$, the corresponding minimum problem is then given by
\[
d_\alpha(\mu_0,\mu_1)=\min_{Q\in TP(\mu_0,\mu_1)}E_\alpha(Q),
\]
where $TP(\mu_0,\mu_1)$ is the set of traffic plans with prescribed time marginals at $t=0,1$, that is
\[
TP(\mu_0,\mu_1)=\{Q\in\cC\, :\, Q \mbox{ concentrated on } \mathrm{Lip}([0,1];\Om),\, (e_i)_\sharp Q=\mu_i,\ i=0,1\},
\]
and $e_t:\cC\to\Om$ is the {\it evaluation map} at time $t$, given by $e_t(\si)=\si(t)$ for every $\si\in\cC$.

\begin{rem}\label{xiarem}
We point out that this model is completely equivalent to the one developed by Xia (see \cite{xi} for the presentation of the model and \cite{becamo3}, Chapter 9, for the equivalence), which is based on a relaxation procedure, starting from an energy defined on finitely atomic probability measures $\mu_0$ and $\mu_1$. In particular, thanks to this relaxed formulation, we get that for every $\mu_0$ and $\mu_1$, there exist two sequences $\mu^n_0$ and $\mu^n_1$ of finitely atomic probability measures, weakly converging to $\mu_0$ and $\mu_1$ respectively, and such that 
\begin{equation}\label{approx}
d_\alpha(\mu^n_0,\mu^n_1)\to d_\alpha(\mu_0,\mu_1).
\end{equation}
\end{rem}

We also need to consider a slight modification of the functional $E_\alpha$ above, introduced in \cite{befi}, 
\[
C_\alpha(Q)=\int_\cC\int_0^1|(\si(t),t)|^{\alpha-1}_Q\,|\si'(t)|\,dt\,dQ(\si),
\]
where now the {\it synchronized multiplicity} $|(x,t)|_Q$ is 
\[
|(x,t)|_Q=Q(\{\si\in\cC\,:\, \si(t)=x\}).
\] 
This second multiplicity accounts for the quantity of curves passing at the same time through the same point, while the one used in the definition of $E_\alpha$ considered all the curves passing eventually through the point: in this sense, the model corresponding to the energy $C_\alpha$ is more dynamical in spirit.
 As a straightforward consequence of the definition of the two multiplicities, we get
\begin{equation}\label{befiineq}
|\si(t)|_Q\ge |(\si(t),t)|_Q, 
\end{equation}
so that $E_\alpha(Q)\le C_\alpha(Q)$. 
Concerning the comparison between the minimization of $E_\alpha$ and $C_\alpha$, we recall the following result (see \cite{befi}, Theorem 5.1).

\begin{thm}\label{berfig}
Let $\mu_0,\mu_1\in\P(\Om)$, with $\mu_0$ a finite sum of Dirac masses. Then for every $\alpha\in[0,1]$ we get
\[
\min_{Q\in TP(\mu_0,\mu_1)}E_\alpha(Q)=\min_{Q\in TP(\mu_0,\mu_1)}C_\alpha(Q).
\]
\end{thm}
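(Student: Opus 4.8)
The inequality $\min_{Q}E_\alpha(Q)\le\min_{Q}C_\alpha(Q)$ is immediate: since $\alpha-1<0$, the pointwise bound \eqref{befiineq} gives $|\si(t)|_Q^{\alpha-1}\le|(\si(t),t)|_Q^{\alpha-1}$, hence $E_\alpha(Q)\le C_\alpha(Q)$ for every admissible $Q$, and passing to the infima yields the claim. The whole content of the statement is therefore the reverse inequality $\min_{Q}C_\alpha(Q)\le\min_{Q}E_\alpha(Q)$, and my plan is to produce, starting from an optimizer $Q$ for $E_\alpha$, a competitor $\tilde Q\in TP(\mu_0,\mu_1)$ with $C_\alpha(\tilde Q)\le E_\alpha(Q)$, which together with the trivial direction closes the argument.

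The guiding idea is that $E_\alpha$ and $C_\alpha$ differ only because $E_\alpha$ counts all curves that ever visit a point $x$ whereas $C_\alpha$ counts only those visiting $x$ at the same instant; the two coincide precisely when the plan is \emph{synchronized}, i.e.\ any two curves sharing a spatial point pass through it at the same time. The crucial observation is that $E_\alpha$ is insensitive both to reparametrizations of individual curves and to the insertion of waiting intervals (stretches on which a curve is constant), since on such intervals $|\si'(t)|=0$ while the multiplicity $|x|_Q$ does not depend on time nor on the image parametrization. I would exploit this gauge freedom. First I invoke the structural theory of optimal traffic plans with finitely atomic source: the single-path property and the acyclicity (loop-free, tree-like) of the support of $Q$, both of which hold precisely because $\mu_0$ is a finite sum of Diracs. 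On an acyclic structure rooted at the finitely many sources one can assign to each point of the network a consistent arrival time, say proportional to its distance from the root along the tree; acyclicity guarantees that this label is single-valued.

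Using this time labelling I reparametrize each curve and insert the appropriate waiting intervals so that every curve reaches each point of the network at its assigned time, then make each curve wait at its leaf until the end and rescale the global time so that all curves depart at $t=0$ and arrive at $t=1$; the resulting $\tilde Q$ then lies in $TP(\mu_0,\mu_1)$ and is synchronized. Since reparametrizations and waits leave $E_\alpha$ unchanged, $E_\alpha(\tilde Q)=E_\alpha(Q)$, while synchronization forces $|(\tilde\si(t),t)|_{\tilde Q}=|\tilde\si(t)|_{\tilde Q}$ for $Q$-a.e.\ $\si$ and a.e.\ $t$ with $|\tilde\si'(t)|\neq0$, so that $C_\alpha(\tilde Q)=E_\alpha(\tilde Q)=E_\alpha(Q)=\min_{Q}E_\alpha(Q)$. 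As $\tilde Q$ is admissible this gives $\min_{Q}C_\alpha(Q)\le\min_{Q}E_\alpha(Q)$.

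The step I expect to be the main obstacle is making the synchronization rigorous while respecting the time-boundary constraints $(e_0)_\sharp\tilde Q=\mu_0$ and $(e_1)_\sharp\tilde Q=\mu_1$: curves have different lengths and different numbers of branch points, so the waiting schedule and the final time rescaling must be chosen coherently across the whole, possibly infinite, tree while preserving the common departure and arrival times. This is exactly where the hypothesis that $\mu_0$ is finitely atomic is used, since it controls the complexity of the structure near the source and ensures that the acyclic time labelling, and hence the synchronization, is well defined; the corresponding statement is known to fail when both marginals are diffuse.
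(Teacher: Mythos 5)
First, a remark on the comparison itself: the paper does not prove Theorem \ref{berfig} at all --- it is recalled from \cite{befi} (Theorem 5.1) and used as a black box in Section \ref{sequ}. So your attempt must be judged against that reference, whose strategy (synchronize an optimal traffic plan by reparametrization and waiting) is indeed the one you outline. Your easy direction is correct: \eqref{befiineq} together with $\alpha-1\le 0$ gives $E_\alpha\le C_\alpha$ pointwise, hence $\min E_\alpha\le\min C_\alpha$, and the content of the theorem is entirely in the reverse inequality.

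For that reverse inequality there is a genuine gap, and it sits exactly where you flag it, so flagging it does not discharge it: the synchronization construction \emph{is} the theorem, and the label you propose fails as stated. ``Arrival time proportional to the distance from the root'' is single-valued only when there is a single root: in that case the single path property makes the network distance $T(x)$ from the source well defined, and the constant-speed parametrization is automatically synchronized, so the case $\mu_0=\delta_{x_0}$ does close. But the hypothesis allows $\mu_0=\sum_{i=1}^n a_i\delta_{x_i}$ with $n\ge 2$, and then a point lying on an arc shared by curves issued from $x_i$ and from $x_j$ is generically at \emph{two different} network distances from the two sources; your label is multivalued precisely on the shared arcs, i.e.\ precisely where synchronization has content. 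The obvious repairs also need work: taking the maximum of the distances from the sources feeding a point is inconsistent, because after a point where two sub-networks split the maximum can decrease, scheduling curves to move backwards in time; the correct object is a recursive (longest-path) arrival time in which waiting inserted at a merge point propagates downstream. One must then prove this recursion is well founded and yields finite, uniformly bounded total delays --- which is not automatic, since finite atomicity of $\mu_0$ does not make the network finite: when $\mu_1$ is diffuse there are infinitely many branch points, and the sub-networks of distinct sources may meet at infinitely many merge points, so the accumulated waiting along a curve is an infinite sum that must be shown convergent, uniformly in the curve, in order to rescale everything into $[0,1]$ keeping each curve Lipschitz and keeping $(e_0)_\sharp\tilde Q=\mu_0$, $(e_1)_\sharp\tilde Q=\mu_1$. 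Two further inaccuracies: the structural facts you invoke (single path property, absence of cycles) are properties of \emph{optimal} plans for arbitrary marginals and $\alpha\in(0,1)$, not consequences of atomicity of $\mu_0$ (what atomicity buys is exactly the global schedule, i.e.\ the unproven part), and they do not cover the endpoints of the stated range $\alpha\in[0,1]$; moreover you never address measurability of the map $\si\mapsto\tilde\si$, which is needed for $\tilde Q$ to be a legitimate push-forward. In short: correct skeleton, same strategy as the cited proof, but the core of that proof is missing and the specific construction proposed for it breaks down for more than one source.
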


We are now in a position to state and prove a result giving the equivalence between our model and the one relative to the energy $E_\alpha$.

\begin{thm}
For every $\alpha\in(0,1)$ and $\mu_0,\mu_1\in\P(\Om)$ we get
\begin{equation}\label{minima}
\B_{\alpha}(\mu_0,\mu_1)=\min_{Q\in TP(\mu_0,\mu_1)}E_\alpha(Q)=d_\alpha(\mu_0,\mu_1).
\end{equation}
\end{thm}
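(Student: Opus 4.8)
The second equality in \eqref{minima} is merely the definition of $d_\alpha$, so the entire content is the identity $\B_\alpha(\mu_0,\mu_1)=d_\alpha(\mu_0,\mu_1)$, which I would establish by exhibiting the two natural passages between the dynamical pairs $(\rho,q)\in\mathfrak D(\mu_0,\mu_1)$ and the traffic plans $Q\in TP(\mu_0,\mu_1)$. The guiding observation is that, whenever $\rho_t=(e_t)_\sharp Q$, the synchronized multiplicity coincides with the atomic mass, $|(x,t)|_Q=Q(\{\sigma(t)=x\})=\rho_t(\{x\})$; hence it is the energy $C_\alpha$ that corresponds \emph{exactly} to $\F$, while $E_\alpha$ only sits below it through \eqref{befiineq}. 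This is why the comparison will be clean for $C_\alpha$, and why Theorem \ref{berfig} must be invoked to bridge $C_\alpha$ and $E_\alpha$.

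First I would prove $d_\alpha(\mu_0,\mu_1)\le\B_\alpha(\mu_0,\mu_1)$ for arbitrary $\mu_0,\mu_1$. Let $(\rho,q)$ be optimal for $\B_\alpha$ (it exists by Theorem \ref{teoexi}); one may assume $\F(\rho,q)<\infty$, so that $q=v\cdot\rho$ with an $L^{1/\alpha}$ bound on $v$ coming from \eqref{Lalpha}. By the superposition principle (see \cite{AGS}) there is $Q\in\mathcal P(\mathcal C)$ concentrated on absolutely continuous curves, which after a common reparametrization lie in $\mathrm{Lip}([0,1];\Om)$, such that $\rho_t=(e_t)_\sharp Q$, $(e_i)_\sharp Q=\mu_i$ and $\sigma'(t)=v_t(\sigma(t))$ for $Q$-a.e.\ $\sigma$ and a.e.\ $t$. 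Disintegrating $Q$ over the atoms of $\rho_t$ and using $|(\sigma(t),t)|_Q=\rho_t(\{\sigma(t)\})$, a direct computation gives $C_\alpha(Q)=\int_0^1\sum_i\rho_t(\{x_i\})^\alpha|v_t(x_i)|\,dt=\F(\rho,q)$. Since $Q\in TP(\mu_0,\mu_1)$, inequality \eqref{befiineq} then yields $d_\alpha(\mu_0,\mu_1)\le E_\alpha(Q)\le C_\alpha(Q)=\B_\alpha(\mu_0,\mu_1)$.

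For the reverse inequality I would first build, from any $Q\in TP(\mu_0,\mu_1)$, the pair $\rho_t=(e_t)_\sharp Q$ and $q_t=(e_t)_\sharp(\sigma'(t)\,Q)$, which solves \eqref{weakform}. Writing $v_t(x)=q_t(\{x\})/\rho_t(\{x\})$ as the $Q$-average of $\sigma'(t)$ over $\{\sigma(t)=x\}$ and applying Jensen's inequality to $|v_t(x)|$ gives $\F(\rho,q)\le C_\alpha(Q)$, whence $\B_\alpha(\mu_0,\mu_1)\le\min_Q C_\alpha(Q)$. When $\mu_0$ is a finite sum of Dirac masses, Theorem \ref{berfig} gives $\min_Q C_\alpha(Q)=\min_Q E_\alpha(Q)=d_\alpha(\mu_0,\mu_1)$, so, combined with the previous paragraph, I obtain $\B_\alpha(\mu_0,\mu_1)=d_\alpha(\mu_0,\mu_1)$ \emph{for finitely atomic data}.

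It remains to remove the atomicity assumption, which is the delicate part. I would invoke Remark \ref{xiarem} to pick finitely atomic $\mu_0^n\weak\mu_0$, $\mu_1^n\weak\mu_1$ with $d_\alpha(\mu_0^n,\mu_1^n)\to d_\alpha(\mu_0,\mu_1)$; by the finitely atomic case there are optimal pairs $(\rho^n,q^n)\in\mathfrak D(\mu_0^n,\mu_1^n)$ with $\F(\rho^n,q^n)=d_\alpha(\mu_0^n,\mu_1^n)$ uniformly bounded. Theorem \ref{teocoe} then provides, after reparametrization, a $\tau$-convergent subsequence whose limit $(\rho,q)$ inherits the endpoints $\mu_0,\mu_1$ and, by the lower semicontinuity of Theorem \ref{teosci}, satisfies $\F(\rho,q)\le\liminf_n\F(\rho^n,q^n)=d_\alpha(\mu_0,\mu_1)$; hence $\B_\alpha(\mu_0,\mu_1)\le d_\alpha(\mu_0,\mu_1)$, and with the lower bound already proven the proof is complete. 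The two obstacles I anticipate are making the superposition step rigorous (checking that finiteness of $\F$ forces enough integrability of $v$ to apply the representation, that the curves can be simultaneously reparametrized into $\mathrm{Lip}([0,1];\Om)$ without changing $E_\alpha$ and $C_\alpha$, and that $\sigma'=v(\sigma)$ holds so the energies match exactly), and verifying that the $\tau$-limit preserves the prescribed time marginals at $t=0,1$, so that the limiting pair is genuinely admissible for $\mathfrak D(\mu_0,\mu_1)$.
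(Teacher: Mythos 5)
Your proposal is correct and follows essentially the same route as the paper's own proof: the superposition principle yields $d_\alpha\le\B_\alpha$, disintegration plus Jensen yields $\B_\alpha\le\min_Q C_\alpha(Q)$, Theorem \ref{berfig} closes the gap for finitely atomic $\mu_0$, and Remark \ref{xiarem} combined with Theorems \ref{teocoe} and \ref{teosci} removes the atomicity assumption. The only difference is that you make explicit (and rightly flag as delicate) the semicontinuity step $\B_\alpha(\mu_0,\mu_1)\le\liminf_n\B_\alpha(\mu_0^n,\mu_1^n)$ and the preservation of the endpoint marginals under $\tau$-convergence, which the paper compresses into a single chain of inequalities.
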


\begin{proof}
We first prove the inequality $\B_{\alpha}(\mu_0,\mu_1)\ge d_\alpha(\mu_0,\mu_1)$. Clearly, if $\B_{\alpha}(\mu_0,\mu_1)=+\infty$ there is nothing to prove; otherwise, take $(\rho,q)$ optimal, which implies, by the way, that $q=v\cdot \rho$ and that $q$ is atomic. Thanks to the {\it superposition principle} (see \cite{AGS}, Theorem 8.2.1) we can construct a probability measure $Q\in\cC$ such that $\rho_t=(e_t)_\sharp Q$ and $Q$ is concentrated on absolutely continuous integral curves of $v$, in the sense that
\[
\int_C\left|\si(t)-\si(0)-\int_0^t v_s(\si(s))\,ds\right|\,dQ(\si)=0,\ \mbox{ for every }t\in[0,1].
\]
Using this information, together with the fact that $E_\alpha\le C_\alpha$ and exchanging the order of integration, we get
\[
\begin{split}
E_\alpha(Q)\le C_\alpha(Q)&=\int_{\cC}\int_0^1|(\si(t),t)|_Q^{\alpha-1}\,|\si'(t)|\,dt\,dQ(\si)\\
&=\int_0^1\int_{\cC}|(\si(t),t)|_Q^{\alpha-1}\,|\si'(t)|\,dQ(\si)\,dt\\
&=\int_0^1\int_{\cC}|(\si(t),t)|_Q^{\alpha-1}\,|v_t(\si(t))|\,dQ(\si)\,dt\\
&=\int_0^1\int_{\Om}|(x,t)|_Q^{\alpha-1}\,|v_t(x)|\,d\rho_t(x)\,dt.
\end{split}
\]
Then we observe that, by virtue of the fact that $\rho_t=(e_t)_\sharp Q$, there holds
\[
|(x,t)|_Q=Q(\{\si\in\cC\, :\, \si(t)=x\})=\rho_t(\{x\}),
\]
so that we can rewrite the last integral as
\[
\int_0^1\int_\Om\rho_t(\{x\})^{\alpha-1}\,|v_t(x)|\,d\rho_t(x)\,dt
=\int_0^1\int\rho_{t}(\{x\})^{\alpha-1}\,d|q_t|(x)\,dt
=\int_0^1\sum_{i\in\N}|v_{t,i}|\rho_{t,i}^\alpha\,dt,
\]
which then gives
\[
d_\alpha(\mu_0,\mu_1)=\min_{Q\in TP(\mu_0,\mu_1)}E_\alpha(Q)\le\F(\rho,q)=\B_{\alpha}(\mu_0,\mu_1).
\]
In order to prove the reverse inequality, we first prove that
\begin{equation}\label{minore}
\B_{\alpha}(\mu_0,\mu_1)\le\min_{Q\in TP(\mu_0,\mu_1)}C_\alpha(Q).
\end{equation}
Take $Q\in TP(\mu_0,\mu_1)$ optimal for $C_\alpha$, then we know that there exists a pair $(\rho,q)$ which is a solution of the continuity equation, with $\rho_t=(e_t)_\sharp Q$ and $q_t=v_t\cdot\rho_t$. The velocity $v$ may be chosen as
\[
v_t(x)=\int\si'(t)\,dQ^{t,x}(\si),
\] 
where $Q^{t,x}$ is the disintegration of $Q$ with respect to the evaluation function $e_t$ (see \cite{Li} for this representation formula of the velocity field $v$). This means that each $Q^{t,x}$ is a probability measure concentrated on the set $\{\si\in\cC\, :\, \si(t)=x\}$ and $Q=\int Q^{t,x}\,d\rho_t(x)$. Therefore, arguing as before
\[
\begin{split}
C_\alpha(Q) &=\int_0^1\int_\cC|(\si(t),t)|^{\alpha-1}_Q\,|\si'(t)|\,dQ(\si)\,dt\\
&=\int_0^1\int_\Om|(x,t)|^{\alpha-1}_Q\left(\int|\si'(t)|\,dQ^{t,x}(\si)\right)\,d\rho_t(x)\,dt\\
&\ge\int_0^1\int_\Om|(x,t)|^{\alpha-1}_Q\,|v_t(x)|\,d\rho_t(x)\,dt\\
&=\int_0^1\int_\Om\rho_t(\{x\})^{\alpha-1}_Q\,|v_t(x)|\,d\rho_t(x)\,dt,
\end{split}
\]
that gives the desired inequality \eqref{minore} since, even if we do not know that $q_t$ or $\rho_t$ are atomic we can restrict the last integral to the set of atoms of $\rho$.
\par
Summarizing, up to now we have shown
\[
d_\alpha(\mu_0,\mu_1)\le \B_{\alpha}(\mu_0,\mu_1)\le \min_{Q\in TP(\mu_0,\mu_1)}C_\alpha(Q),
\]
and equality holds whenever $\mu_0$ is a finite sum of Dirac masses, thanks to Theorem \ref{berfig}.
In order to conclude, it is enough to notice that
thanks to Remark \ref{xiarem}, we may take two sequences $\mu^n_0$ and $\mu^n_1$ of finitely atomic probability measures such that $\mu^n_0\weak \mu_0$, $\mu^n_1\weak \mu_1$ and 
\[
d_\alpha(\mu^n_0,\mu^n_1)\to d_\alpha(\mu_0,\mu_1),
\]
thus getting
\[
d_\alpha(\mu_0,\mu_1)\le \B_{\alpha}(\mu_0,\mu_1)\le \liminf_{n\to\infty} \B_{\alpha}(\mu^n_0,\mu^n_1)\le\lim_{n\to\infty} d_\alpha(\mu^n_0,\mu^n_1)=d_\alpha(\mu_0,\mu_1), 
\] 
hence concluding the proof.
\end{proof}

\begin{rem}
Observe that in the previous Theorem, we did not only prove the equality of the minima, but we also provided a natural way to pass from a minimizer of our formulation {\it \`a la} Benamou-Brenier to a minimizer of the traffic plans model and back. The two problems are thus equivalent in the sense that they describe the same kind of energy and the same optimal structures of branched transport: the simple equality of the minima \eqref{minima} is just a consequence of this more important fact. 
\end{rem}

\section*{Appendix: the distances $d_\alpha$ and $W_{1/\alpha}$}\label{swas}

This last section is devoted to estimates between the distance $d_\alpha$ induced by the branched transport and the Wasserstein distances $W_p$. In particular, in \cite{mosa} the following estimates are proven for $\alpha>1-1/d$ and $p\ge1$:
\[
d_\alpha \le C\, W_{p}^{d(\alpha-1)+1}.
\]
As far as lower bounds on $d_\alpha$ are concerned, the most trivial one is $d_\alpha\ge W_1$ but \cite{deso}, Theorem 8.1, also proves $d_\alpha \ge W_{1/\alpha},$ which is slightly better. Moreover, for scaling reasons (w.r.t. the mass) it is not possible to go beyond $p=1/\alpha$ in this last inequality.

In this paper we already needed to estimate some branched transport cost in terms of $W_{1/\alpha}$ distances and metric derivatives. In this section we prove the inequalities
$$W_{1/\alpha}\le d_\alpha\le C\,W_{1/\alpha}^{d(\alpha-1)+1}\qquad\forall\alpha\in(1-1/d,1].$$
These inequalities are just a particular case of those that are already known. We restrict our attention to $p=1/\alpha$ in the first one and $p\ge 1/\alpha$ in the second one (as we said, they are proven in \cite{deso,mosa}), but the proof we will provide is different and somehow simpler.

The first inequality will be approached through the formulation of branched transport we gave in this paper, but the main tool (i.e. inequality \eqref{Lalpha}) is essentially in common with \cite{deso} and \cite{maso}. What is different is the way to extend this idea to generic measures, i.e. non-atomic ones.

\begin{thm}
For every $\mu_0,\mu_1\in\P(\Om)$ we get
\begin{equation}\label{devisoli}
W_{1/\alpha}(\mu_0,\mu_1)\le d_\alpha(\mu_0,\mu_1).
\end{equation} 
\end{thm}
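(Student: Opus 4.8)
The plan is to reduce everything to the Benamou-Brenier formulation of this paper. By the equivalence \eqref{minima} we have $\B_\alpha(\mu_0,\mu_1)=d_\alpha(\mu_0,\mu_1)$, so it suffices to prove $W_{1/\alpha}(\mu_0,\mu_1)\le\B_\alpha(\mu_0,\mu_1)$. If there is no admissible pair of finite energy then $d_\alpha(\mu_0,\mu_1)=+\infty$ and there is nothing to prove; otherwise I fix an arbitrary $(\rho,q)\in\mathfrak{D}(\mu_0,\mu_1)$ with $\F(\rho,q)<+\infty$ (necessarily $q=v\cdot\rho$), and I bound $W_{1/\alpha}(\mu_0,\mu_1)$ by $\F(\rho,q)$; passing to the infimum over such pairs then yields the claim.

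The key ingredient is inequality \eqref{Lalpha}, which holds time-slice by time-slice:
\[
F(\rho_t,q_t)\ge\|v_t\|_{L^{1/\alpha}(\rho_t)}\qquad\text{for a.e. }t\in[0,1].
\]
Integrating in time gives $\F(\rho,q)\ge\int_0^1\|v_t\|_{L^{1/\alpha}(\rho_t)}\,dt$, and in particular the right-hand side is finite. Exactly as in the proof of Theorem \ref{teocoe}, this $L^{1/\alpha}$ control of the velocity forces $\rho$ to be an absolutely continuous curve in the metric space $(\P(\Om),W_{1/\alpha})$.

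I then invoke the Benamou-Brenier link recalled in the Introduction: for a pair solving the continuity equation with $q_t=v_t\cdot\rho_t$ one has the metric-derivative bound $|\rho'_t|_{W_{1/\alpha}}\le\|v_t\|_{L^{1/\alpha}(\rho_t)}$ for a.e. $t$. Since $(\P(\Om),W_{1/\alpha})$ is a geodesic, hence length, space, the distance between the endpoints is dominated by the length of any connecting curve, so that
\[
W_{1/\alpha}(\mu_0,\mu_1)=W_{1/\alpha}(\rho_0,\rho_1)\le\int_0^1|\rho'_t|_{W_{1/\alpha}}\,dt\le\int_0^1\|v_t\|_{L^{1/\alpha}(\rho_t)}\,dt\le\F(\rho,q).
\]
Taking the infimum over admissible pairs and using \eqref{minima} gives $W_{1/\alpha}(\mu_0,\mu_1)\le\B_\alpha(\mu_0,\mu_1)=d_\alpha(\mu_0,\mu_1)$.

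The computation is short once \eqref{Lalpha} is available; the only genuinely delicate point --- and the novelty emphasized in the text --- is that $\mu_0$ and $\mu_1$ need not be atomic, whereas every finite-energy path $(\rho,q)$ is supported on atomic measures at a.e. time. This is precisely what the chain above circumvents: the geodesic inequality for $W_{1/\alpha}$ compares the distance of the (arbitrary) endpoints with the length of the intermediate (atomic) curve, so atomicity of $\mu_0,\mu_1$ is never needed. An alternative avoiding the appeal to \eqref{minima} would be to run the same estimate directly on a traffic plan $Q$ optimal for $E_\alpha$, disintegrating along the evaluation maps $e_t$ and applying \eqref{Lalpha} slice by slice; the obstacle there is the very same passage to non-atomic measures, which the present dynamical formulation handles cleanly.
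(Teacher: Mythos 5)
Your proof is correct and follows essentially the same route as the paper: reduce to the Benamou--Brenier formulation via the equivalence \eqref{minima}, apply inequality \eqref{Lalpha} slice by slice to bound $\F(\rho,q)$ below by $\int_0^1\|v_t\|_{L^{1/\alpha}(\rho_t)}\,dt$, invoke the metric-derivative bound $|\rho'_t|_{W_{1/\alpha}}\le\|v_t\|_{L^{1/\alpha}(\rho_t)}$ from the continuity equation, and conclude by comparing the curve length with the distance between its endpoints. The only (immaterial) difference is that you work with an arbitrary finite-energy admissible pair and pass to the infimum, while the paper evaluates the same chain of inequalities on an optimal pair.
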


\begin{proof}
We first observe that thanks to the results of the previous section, for every $\mu_0,\mu_1\in\P(\Om)$ we get
\[
d_\alpha(\mu_0,\mu_1)=\int_0^1\Big[\int_\Omega|v_t(x)|\rho_t(\{x\})^\alpha\,d\#(x)\Big]\,dt.
\] 
for a suitable $(\rho,q)$ admissible in the formulation \eqref{minprob}, with $q=\rho v$. Moreover, using once more the inequality \eqref{Lalpha} the right-hand side in the previous expression can be estimated as
\[
\int_0^1\left[\int_\Omega|v_t(x)|\rho_t(\{x\})^\alpha\,d\#(x)\right]\,dt\ge\int_0^1\|v_t\|_{L^{1/\alpha}(\rho_t)}\,dt
\]
and finally, using the fact that $(\rho,q)$ is solution of the continuity equation, we can infer (see \cite{AGS}, Theorem 8.3.1)
\[
|\rho'_t|_{W_{1/\alpha}}\le\|v_t\|_{L^{1/\alpha}(\rho_t)},\ \mbox{ for $\sL^1$-a.e. }t\in [0,1],
\]
so that
\[
d_\alpha(\mu_0,\mu_1)\ge\int_0^1|\rho'_t|_{W_{1/\alpha}}\,dt\ge W_{1/\alpha}(\mu_0,\mu_1),
\]
where in the last inequality we just estimated the length of a curve by the distance between its endpoints. Thus we have obtained \eqref{devisoli}, concluding the proof.
\end{proof}

In order to prove the other inequality, first of all we have to introduce some notations: we set $Q=[0,1)^d$ and $Q_L=[0,L)^d$, for every $j\in\N$ we consider the following subset of multi-indexes
\[
B_j=\{z\in\N^d\ :\ \|z\|_\infty\le2^j-1\},
\]
and observe that $\#(B_j)=2^{jd}$, then we make a partition of the cube $Q_L$ by dyadic cubes having edge length $L/2^j$, i.e.
\[
Q_L=\bigcup_{i=1}^{2^{jd}}Q^i_{j}:=\bigcup_{z\in B_j}\frac{L\,Q+L\,z}{2^j}.
\]
For every $\mu\in\P(\Om)$ such that $\Om\subset Q_L$, its {\it dyadic approximation} is given by
\[
a_j(\mu)=\sum_{i=1}^{2^{jd}}\mu^i_{j}\,\delta_{x^i_{j}},
\]
where $\mu^i_j=\mu(Q^i_j)$ and $x^i_j$ is the center of $Q^i_j$. We shall always assume that $\Om\subset Q_L$ for a suitable $L$, then the following estimate is well-known (Proposition 6.6, \cite{becamo3}).

\begin{prop}\label{prop:dyadic}
Let $\alpha\in(1-1/d,1]$, then for every $\mu\in\P(\Om)$ we have
\begin{equation}\label{dyadic}
d_\alpha(a_j(\mu),\mu)\le\frac{2^{(d(1-\alpha)-1)j}}{2^{1-d(1-\alpha)}-1}\,\frac{L\sqrt{d}}{2}. 
\end{equation}
\end{prop}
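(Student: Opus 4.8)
The plan is to estimate $d_\alpha(a_j(\mu),\mu)$ by a telescoping comparison between consecutive dyadic approximations, using that $d_\alpha$ is a genuine distance (so the triangle inequality is available, by concatenation of networks) and that the cost $m^\alpha\ell$ of a single straight edge carrying mass $m$ over a length $\ell$ furnishes an upper bound for $d_\alpha$ through an explicit competitor. Throughout I would set $\beta:=d(1-\alpha)$ and note that the hypothesis $\alpha>1-1/d$ is exactly the condition $\beta<1$, which is what makes the geometric series below converge.

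First I would bound a single refinement step $d_\alpha(a_{j+1}(\mu),a_j(\mu))$. Each level-$j$ cube $Q^i_j$ splits into $2^d$ level-$(j+1)$ subcubes, whose centers are offset from the parent center $x^i_j$ by $\pm L/2^{j+2}$ in each coordinate, hence lie at distance $L\sqrt d/2^{j+2}$ from $x^i_j$. Transporting, inside each $Q^i_j$, the masses sitting at the $2^d$ children straight to $x^i_j$ yields an admissible weighted graph connecting $a_{j+1}(\mu)$ to $a_j(\mu)$, whose energy is at most $\frac{L\sqrt d}{2^{j+2}}\sum_{\text{children}}m^\alpha$. I would then apply the concavity (power-mean) inequality $\sum_{k=1}^n m_k^\alpha\le n^{1-\alpha}\big(\sum_k m_k\big)^\alpha$ twice: first to the $2^d$ children of each cube, producing a factor $2^{d(1-\alpha)}=2^\beta$ times $(\mu^i_j)^\alpha$; then across the $2^{jd}$ level-$j$ cubes, whose masses sum to $1$, giving $\sum_i(\mu^i_j)^\alpha\le 2^{jd(1-\alpha)}=2^{j\beta}$. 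This produces
\[
d_\alpha(a_{j+1}(\mu),a_j(\mu))\le \frac{L\sqrt d}{2^{j+2}}\,2^{\beta}\,2^{j\beta}=L\sqrt d\;2^{\beta-2}\,2^{j(\beta-1)}.
\]

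Next I would sum over scales. The triangle inequality gives $d_\alpha(a_j(\mu),a_m(\mu))\le\sum_{k=j}^{m-1}d_\alpha(a_k(\mu),a_{k+1}(\mu))$ for $m>j$; since $a_m(\mu)\weak\mu$ and $d_\alpha$ is lower semicontinuous for the weak convergence (it metrizes it on $\P(\Om)$ when $\alpha>1-1/d$), letting $m\to\infty$ yields $d_\alpha(a_j(\mu),\mu)\le\sum_{k=j}^\infty d_\alpha(a_k(\mu),a_{k+1}(\mu))$. Inserting the one-step bound and summing the geometric series $\sum_{k\ge j}2^{k(\beta-1)}=2^{j(\beta-1)}/(1-2^{\beta-1})$, which converges precisely because $\beta<1$, I would simplify the prefactor using $1-2^{\beta-1}=2^{\beta-1}(2^{1-\beta}-1)$, so that $L\sqrt d\,2^{\beta-2}/(1-2^{\beta-1})=\frac{L\sqrt d}{2}\cdot\frac{1}{2^{1-\beta}-1}$. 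Recalling $2^{j(\beta-1)}=2^{(d(1-\alpha)-1)j}$, this is exactly the right-hand side of \eqref{dyadic}.

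The only genuinely delicate point is the bookkeeping: keeping the two applications of the concavity inequality with the correct exponents and verifying that the geometric factor collapses onto the stated constant. Everything else is the explicit star-shaped construction inside each dyadic cube and the standard facts that $d_\alpha$ is a lower semicontinuous distance metrizing weak convergence in the supercritical range $\alpha>1-1/d$.
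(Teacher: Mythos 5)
Your proof is correct, including the exact constant: with $\beta=d(1-\alpha)$, the one-step bound $d_\alpha(a_{k+1}(\mu),a_k(\mu))\le L\sqrt{d}\,2^{\beta-2}\,2^{k(\beta-1)}$ and the geometric summation do collapse precisely onto the right-hand side of \eqref{dyadic}, and your use of lower semicontinuity of $d_\alpha$ under weak convergence (rather than assuming $d_\alpha(a_m(\mu),\mu)\to 0$, which would be circular) is the right way to close the telescoping argument. The paper itself offers no proof of this proposition --- it is quoted as Proposition 6.6 of \cite{becamo3} --- and your argument (dyadic refinement, the concavity inequality applied twice, geometric series) is essentially the standard proof given in that reference, so the two approaches coincide.
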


The main tool if one wans to estimate $d_\alpha$ from above by a power of
$W_p$ is to show that the distance $d_\alpha$ between two dyadic approximations can be estimated in terms of their $1/\alpha$-Wasserstein distance: this is the content of the next result.

\begin{lem} 
Let $\alpha\in(1-1/d,1]$, then for every $\mu_0,\mu_1\in\P(\Om)$ we get
\begin{equation}\label{atomics}
d_\alpha(a_j(\mu_0),a_j(\mu_1))\le C\, W_{1/\alpha}(a_j(\mu_0),a_j(\mu_1))\, 2^{jd(1-\alpha)},
\end{equation}
with $C$ depending only on $N$ and $\alpha$.
\end{lem}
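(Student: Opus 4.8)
I want to bound $d_\alpha$ between two finitely atomic measures $a_j(\mu_0)$ and $a_j(\mu_1)$, both supported on the $2^{jd}$ dyadic centers $x^i_j$, by a constant times $W_{1/\alpha}$ times the scaling factor $2^{jd(1-\alpha)}$. The natural strategy is to take an optimal transport plan $\gamma$ for the $W_{1/\alpha}$ cost between $a_j(\mu_0)$ and $a_j(\mu_1)$, and from it build an explicit competitor traffic plan (or equivalently an admissible graph/path structure) whose energy $E_\alpha$ is controlled. The key point is that both measures live on the same finite grid, so the plan $\gamma$ is itself finitely supported, a sum $\sum_{i,k} m_{ik}\delta_{(x^i_j, x^k_j)}$, and I can move mass along straight segments from $x^i_j$ to $x^k_j$.

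\medskip
\textbf{Key steps.} First I would fix an optimal plan $\gamma$ and consider, for each pair $(i,k)$ carrying mass $m_{ik}$, a single curve traveling from $x^i_j$ to $x^k_j$, of length $|x^i_j - x^k_j|$. The cost of such an isolated move, in the Gilbert/$E_\alpha$ sense, is at most $m_{ik}^\alpha\,|x^i_j-x^k_j|$ (just sending mass $m_{ik}$ alone along the segment, ignoring any favorable branching, which can only decrease the cost). Summing gives
\[
d_\alpha(a_j(\mu_0),a_j(\mu_1)) \le \sum_{i,k} m_{ik}^\alpha\,|x^i_j-x^k_j|.
\]
Now I must relate the right-hand side to $W_{1/\alpha}$. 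By definition $W_{1/\alpha}(a_j(\mu_0),a_j(\mu_1))^{1/\alpha} = \sum_{i,k} m_{ik}\,|x^i_j-x^k_j|^{1/\alpha}$. The discrepancy is the presence of $m_{ik}^\alpha$ versus $m_{ik}$, and a power $1$ versus $1/\alpha$ on the distance. This is exactly where the number of grid points enters: there are at most $2^{jd}$ source cells and $2^{jd}$ target cells, hence at most $(2^{jd})^2$ active pairs, and I would apply H\"older's inequality (or the sub-additivity and concavity of $t\mapsto t^\alpha$ together with a counting bound) to convert $\sum m_{ik}^\alpha |x^i_j-x^k_j|$ into a power of $\sum m_{ik}|x^i_j-x^k_j|^{1/\alpha}$ at the cost of a factor that is a power of the number of cells, producing the $2^{jd(1-\alpha)}$ factor.

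\medskip
\textbf{Making the H\"older step precise.} With exponents $1/\alpha$ and $1/(1-\alpha)$ applied to the product $m_{ik}^\alpha |x^i_j-x^k_j| = \big(m_{ik}|x^i_j-x^k_j|^{1/\alpha}\big)^\alpha \cdot 1^{1-\alpha}$, summing over the active index pairs gives
\[
\sum_{i,k} m_{ik}^\alpha |x^i_j-x^k_j|
\le \Big(\sum_{i,k} m_{ik}|x^i_j-x^k_j|^{1/\alpha}\Big)^{\alpha} \,(\#\{\text{active pairs}\})^{1-\alpha}.
\]
The first factor is $W_{1/\alpha}^{1/\alpha\cdot\alpha} = W_{1/\alpha}$, and bounding the number of active pairs by $2^{2jd}$ would give a factor $2^{2jd(1-\alpha)}$, which is too large. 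The fix is to count pairs more carefully: mass leaving a fixed source cell $x^i_j$ need only be split among the target cells it actually reaches, and an optimal plan can be taken to be induced by a map or at least to have support of size at most $2^{jd}+2^{jd}-1$ (a spanning-tree / extreme-point bound on optimal transport plans between measures on $2^{jd}$ points), so the number of active pairs is $O(2^{jd})$ rather than $2^{2jd}$. That yields $(\#\text{pairs})^{1-\alpha} \le C\,2^{jd(1-\alpha)}$, which is precisely the claimed factor.

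\medskip
\textbf{Main obstacle.} The delicate point is obtaining the correct exponent $2^{jd(1-\alpha)}$ rather than $2^{2jd(1-\alpha)}$; the naive pair count is off by a factor, and the argument genuinely requires the sparsity of optimal transport plans on finite point sets (support on at most $(\text{\#points})$-many pairs, via the extreme-point structure of the transport polytope). Everything else — the crude $E_\alpha$ bound by isolated straight moves and the H\"older conversion — is routine. I would therefore concentrate the proof on justifying the sparse-support reduction and on verifying that the condition $\alpha > 1-1/d$ is exactly what keeps the resulting constants summable when this lemma is later combined with the dyadic approximation estimate \eqref{dyadic}.
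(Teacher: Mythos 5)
Your proposal is correct and follows essentially the same route as the paper: bound $d_\alpha$ by isolated straight-line moves along an optimal plan for the $|x-y|^{1/\alpha}$ cost, apply H\"older with exponents $1/\alpha$ and $1/(1-\alpha)$, and control the number of active pairs by $O(2^{jd})$ using the extreme-point (acyclic) structure of optimal plans on the transport polytope, which is exactly the paper's appeal to Denny's result on acyclic matrices. The one refinement in the paper is that it states the sparse-support bound explicitly as at most $2\cdot 2^{jd}$ nonzero entries, matching your spanning-tree count of $2^{jd}+2^{jd}-1$.
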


\begin{proof}
Let us consider an optimal transport $\ga_j$ between $a_j(\mu_0)$ and $a_j(\mu_1)$, for the cost $c(x,y)=|x-y|^{1/\alpha}$, that is $\ga_j\in\P(\Om\times\Om)$ and it is of the form
\[
\ga_j=\sum_{i,k=1}^{2^{jd}} M_j(i,k) \delta_{x^i_j}\otimes\delta_{x^k_j},
\]
with the $2^{jd}\times 2^{jd}$ matrix $\{M_j(i,k)\}_{i,k}$ belonging to the convex set $\mathfrak{M}$ given by
\[
\mathfrak{M}=\left\{ \{a_{i,k}\}_{i,k}\,:\, a_{i,k}\ge 0,\ \sum_{i=1}^{2^{jd}} a_{i,k}=\mu_1(Q^k_j),\ \sum_{k=1}^{2^{jd}} a_{i,k}=\mu_0(Q^i_j) \right\}.
\]
We know by optimality that $\{M_j(i,k)\}_{i,k}$ can be taken to belong to $\mathrm{Ext\,}(\mathfrak{M})$, the set of extremal points of $\mathfrak{M}$, which consists of the so-called {\it acyclic matrices} (see \cite{De}). They are those matrices belonging to $\mathfrak{M}$ such that the following property holds:
\[
\prod_{r=1}^s a_{i_r k_r}a_{i_{r} k_{r+1}}=0,
\]
for every $2\le s\le2^{jd}$ and every set of indices $i_1<\dots<i_s\in\{1,\dots,2^{jd}\}$, $k_1<\dots<k_j\in\{1,\dots,2^{jd}\}$ (the convention $i_{2^{jd}+1}=i_1$ and $k_{2^{jd}+1}=k_1$ is used).
This implies in particular that
\begin{equation}\label{acyclic}
\#\{(i,k)\ :\ M_j(i,k)\not= 0\}\le 2\cdot 2^{jd},
\end{equation}
that is $\{M_j(i,k)\}_{i,k}$ has at most $2\cdot 2^{jd}$ non-zero entries: in other terms, this optimal transport plan $\ga_j$ does not move more than $2\cdot 2^{jd}$ atoms. Setting $|x^i_j-x^k_j|=\ell_{i,k}$, we then get
\[
W_{1/\alpha}(a_j(\mu_0),a_j(\mu_1))=\left(\sum_{i,k=1}^{2^{jd}}M_j(i,k)\,\ell_{i,k}^{1/\alpha}\right)^\alpha,
\] 
and using \eqref{acyclic} and Jensen's inequality 
\[
\begin{split}
d_\alpha(a_j(\mu_0),a_j(\mu_1))&\le\sum_{i,k=1}^{2^{jd}} M_j(i,k)^\alpha \ell_{i,k}=\sum_{i,k=1}^{2^{jd}}\left(M_j(i,k)\, \ell_{i,k}^{\frac{1}{\alpha}}\right)^{\alpha}\\
&\le \left(\sum_{i,k=1}^{2^{jd}}M_j(i,k)\,\ell_{i,k}^\frac{1}{\alpha}\right)^\alpha\left(\#\{(i,k)\, :\, M_j(i,k)\not= 0\}\right)^{1-\alpha}\\
&\le C\,W_{1/\alpha}(a_j(\mu_0),a_j(\mu_1))\, 2^{jd(1-\alpha)}, 
\end{split}
\]
concluding the proof.
\end{proof}

\begin{thm}
Let $\alpha\in(1-1/d,1]$, then for every $p\ge 1/\alpha$, we get
\begin{equation}\label{xia_wasser}
d_\alpha(\mu_0,\mu_1)\le C\, W_p(\mu_0,\mu_1)^{d(\alpha-1)+1},
\end{equation}
with a constant $C$ depending only on $d$, $\alpha$ and on the diameter of $\Om$.
\end{thm}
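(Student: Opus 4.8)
The plan is to interpolate between $\mu_0$ and $\mu_1$ through their dyadic approximations $a_j(\mu_0)$ and $a_j(\mu_1)$ at a scale $j$ chosen at the very end, combining the triangle inequality for $d_\alpha$ with the two estimates already at our disposal: Proposition \ref{prop:dyadic} and inequality \eqref{atomics}. First I would write
\[
d_\alpha(\mu_0,\mu_1)\le d_\alpha(\mu_0,a_j(\mu_0))+d_\alpha(a_j(\mu_0),a_j(\mu_1))+d_\alpha(a_j(\mu_1),\mu_1).
\]
The two outer terms are controlled directly by Proposition \ref{prop:dyadic}, each being bounded by $C\,2^{(d(1-\alpha)-1)j}$ with $C$ depending only on $d$, $\alpha$ and $L$; note that the exponent $d(1-\alpha)-1$ is negative precisely because $\alpha>1-1/d$, so these terms are small when $j$ is large.

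For the central term I would invoke \eqref{atomics},
\[
d_\alpha(a_j(\mu_0),a_j(\mu_1))\le C\,W_{1/\alpha}(a_j(\mu_0),a_j(\mu_1))\,2^{jd(1-\alpha)},
\]
and then estimate the Wasserstein distance between the two approximations. Two ingredients feed in here: since $1/\alpha\le p$, the monotonicity $W_{1/\alpha}\le W_p$ holds, and since the dyadic approximation displaces each unit of mass by at most the half-diagonal of a cell, one has $W_{1/\alpha}(a_j(\mu),\mu)\le \tfrac{\sqrt d\,L}{2}\,2^{-j}$. By the triangle inequality for $W_{1/\alpha}$ these give
\[
W_{1/\alpha}(a_j(\mu_0),a_j(\mu_1))\le W_p(\mu_0,\mu_1)+\sqrt d\,L\,2^{-j}.
\]
Substituting and collecting the three contributions (the term $\sqrt d\,L\,2^{-j}\cdot 2^{jd(1-\alpha)}$ carries the exponent $2^{(d(1-\alpha)-1)j}$, so it merges with the outer terms) yields a bound of the shape
\[
d_\alpha(\mu_0,\mu_1)\le C\Big(2^{(d(1-\alpha)-1)j}+W_p(\mu_0,\mu_1)\,2^{jd(1-\alpha)}\Big),
\]
with $C$ depending only on $d$, $\alpha$ and $L$.

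What remains, and is the only genuinely delicate point, is the optimization in the integer parameter $j\ge 0$. The first summand decreases like $2^{-(1-d(1-\alpha))j}$ while the second grows like $W_p\,2^{d(1-\alpha)j}$; balancing them forces $2^{-j}\approx W_p(\mu_0,\mu_1)$, i.e. $j\approx\log_2\big(1/W_p(\mu_0,\mu_1)\big)$. Taking $j$ to be the nearest nonnegative integer, $2^{-j}$ stays within a factor $2$ of the optimum, so both summands become comparable to $W_p^{\,1-d(1-\alpha)}=W_p^{\,d(\alpha-1)+1}$, up to a multiplicative constant depending only on $d$, $\alpha$ and $L$ (hence on $\diam(\Omega)$). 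The subtlety is that $j$ must be a nonnegative integer: when $W_p(\mu_0,\mu_1)$ is too large for $\log_2(1/W_p)$ to be nonnegative, that is when $W_p$ is comparable to the diameter, I would simply take $j=0$ and use that $d_\alpha$ is then bounded by a constant depending only on $d$, $\alpha$ and $\diam(\Omega)$, while $W_p^{\,d(\alpha-1)+1}$ is bounded below; the inequality is trivial in this regime. Combining the two regimes gives \eqref{xia_wasser} and finishes the proof.
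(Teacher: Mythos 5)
Your proposal is correct and follows essentially the same route as the paper's proof: triangle inequality through the dyadic approximations $a_j(\mu_0),a_j(\mu_1)$, Proposition \ref{prop:dyadic} for the outer terms, inequality \eqref{atomics} for the middle term, a Wasserstein triangle inequality to replace $W_{1/\alpha}(a_j(\mu_0),a_j(\mu_1))$ by $W_{1/\alpha}(\mu_0,\mu_1)+C2^{-j}$, and finally the choice $2^{-j}\approx W_{1/\alpha}(\mu_0,\mu_1)$ (the paper pins $j$ by $\diam(\Om)2^{-j}\le W_{1/\alpha}(\mu_0,\mu_1)\le\diam(\Om)2^{-(j-1)}$, which automatically yields a nonnegative integer and handles your ``large $W_p$'' regime). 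The only cosmetic difference is that you balance against $W_p$ directly, while the paper reduces to $p=1/\alpha$ at the outset via $W_{1/\alpha}\le W_p$; both are equivalent.
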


\begin{proof}
It is enough to show the validity of \eqref{xia_wasser} for $p=1/\alpha$, then the general case will be just a consequence of the
monotonicity property of the Wasserstein distances, i.e. 
\[
W_{1/\alpha}\le W_p,\ \mbox{ for every } p\ge 1/\alpha.
\]
Using the triangular inequality, \eqref{dyadic} and \eqref{atomics}, we get for every $j\in\N$
\[
\begin{split}
d_\alpha(\mu_0,\mu_1)&\le d_\alpha(\mu_0,a_j(\mu_0))+d_\alpha(a_j(\mu_0),a_j(\mu_1))+d_\alpha(a_j(\mu_1),\mu_1)\\
&\le C\,2^{(d(1-\alpha)-1)j}+d_\alpha(a_j(\mu_0),a_j(\mu_1))\\
&\le C\,2^{(d(1-\alpha)-1)j}+C\,W_{1/\alpha}(a_j(\mu_0),a_j(\mu_1))\, 2^{jd(1-\alpha)},
\end{split}
\]
and
\[
\begin{split}
W_{1/\alpha}(a_j(\mu_0),a_j(\mu_1))&\le W_{1/\alpha}(a_j(\mu_0),\mu_0)+W_{1/\alpha}(\mu_0,\mu_1)+W_{1/\alpha}(a_j(\mu_0),\mu_1)\\
&\le C\, 2^{-j}+W_{1/\alpha}(\mu_0,\mu_1),
\end{split}
\]
which finally gives
\[
\begin{split}
d_\alpha(\mu_0,\mu_1)&\le C\,2^{(d(1-\alpha)-1)j}+C\,W_{1/\alpha}(\mu_0,\mu_1)\, 2^{jd(1-\alpha)}\\
&= C\,2^{(d(1-\alpha)-1)j}\left(1+W_{1/\alpha}(\mu_0,\mu_1)\,2^{j}\right).
\end{split}
\]
It is now sufficient to choose the index $j$ in such a way that
\[
\frac{\diam(\Om)} {2^j}\le W_{1/\alpha}(\mu_0,\mu_1)\le\frac{\diam(\Om)}{2^{j-1}},
\]
which in turn yields
\[
2^{(d(1-\alpha)-1)j}(1+W_{1/\alpha}(\mu_0,\mu_1)2^j)\le C\, W_{1/\alpha}(\mu_0,\mu_1)^{d(\alpha-1)+1},
\]
thus giving the thesis.
\end{proof}

\begin{rem}
As we briefly mentioned, observe that the distances $d_\alpha$ and $W_{1/\alpha}$ have exactly the same scaling with respect to the mass.
\end{rem}

\begin{rem}
We point out that the very same $\mu_0$ and $\mu_1$ of Example 6.19 in \cite{becamo3} show that the exponent $d(\alpha-1)+1$ in inequality \eqref{xia_wasser} cannot be improved.
\end{rem}

\begin{ack}
The authors acknowledge the support of {\it Agence Nationale de la Recherche} via the research project OTARIE, of the Universit\'e Franco-Italienne via the mobility program {\it Galil\'ee} ``Allocation et Exploitation et Evolution Optimales des Ressources: r\'eseaux, points et densit\'es, mod\`eles discrets et continus'' as well of the Universit\`a di Pisa through the program {\it Cooperazione Accademica Internazionale} ``Optimal trasportation and related topics''. The first author has been partially supported by the European Research Council under FP7, Advanced Grant n. 226234 ``Analytic Techniques for Geometric and Functional Inequalities''.
\end{ack}

\end{document}